\declaretheoremstyle[bodyfont=\normalfont]{noncursive}
\declaretheorem{theorem}
\declaretheorem[numberlike=theorem]{proposition}
\declaretheorem[numberlike=theorem]{corollary}
\declaretheorem[style=noncursive,numberlike=theorem]{definition}
\declaretheorem[style=noncursive,numberlike=theorem]{remark}
\declaretheorem[style=noncursive,numberlike=theorem]{problem}
\newcommand{\CC}[1]{\mathbb{C}^{#1}}
\newcommand{\RR}[1]{\mathbb{R}^{#1}}
\newcommand{\lr}{\longrightarrow}
\numberwithin{equation}{section}
\title[Normal form for second order differential equations]{Normal form for second order differential equations}
	\author {Ilya Kossovskiy }
	\address{\parbox{0.8\linewidth}{%
	               Faculty of Mathematics, University of Vienna, Austria/\\
	               Department of Mathematics and Statistics, Masaryk University, Brno, Czechia}
	    }
	\email{kossovi3@univie.ac.at, kossovskiyi@math.muni.cz}
	\author {D. Zaitsev*}
\address{School of Mathematics, Trinity College, Dublin}
\email{zaitsev@maths.tcd.ie}
\begin{document}

\begin{abstract}
Applying methods of {\em CR-geometry}, we solve the local equivalence problem for second order (smooth or analytic) ordinary differential equations. We do so by presenting a {\em complete convergent normal form} for this class of ODEs. The normal form 
is optimal in the sense that it is defined up to the automorphism group of the model (flat) ODE $y''=0$. For a generic ODE, we also provide a unique normal form. By doing so, we give a solution to a problem which remained unsolved since the work of Arnold \cite{arnoldgeom}. The method can be immediately applied to important classes of second order ODEs, in particular, the Painlev\'e equations.

As another application of the convergent normal form, we discover distinguished curves  associated with a differential equation that we call {\em chains}. 
\end{abstract}

\maketitle

\date{\today}

\tableofcontents

\section{Introduction}

\subsection{Overview of the normal form problem} The aim of the paper is to use methods of {\em Cauchy-Riemann geometry} (shortly CR-geometry) for solving the following well known problem: 
construct a {\em complete} normal form for 
a (real or complex) second order ordinary differential equation (ODE)
\begin{equation}\label{ODE}
\mathcal E=\bigl\{y\rq{}\rq{}=F(x,y,y\rq{})\bigr\},
\end{equation}
considered locally near a fixed reference point $p=(x_0,y_0, u_0)$,
representing a fixed solution $y=\varphi(x)$ with $u_0 = \varphi'(x_0)$.
%Such a solution is fixed uniquely by choosing $u_0:=\varphi(x_0)$, that is why the local data under consideration will be addressed in what follows as a {\em germ of a second order ODE} and denoted by
%We shall write
%$$(\mathcal E,p),$$
%for the germ of $F$ at $p$ and call it {\em germ of a second order ODE}.
%where $p$ stands for the triple $(x_0,y_0,u_0).$ 
Without loss of generality, we assume $(x_0,y_0,u_0)=0$. 
The function $F$ in \eqref{ODE} is assumed to be either $C^\infty$ near the origin in $\RR{3}$, or holomorphic near the origin in $\CC{3}$, depending on the context. In what follows, we do our considerations in the real case only, as in the complex case they are very analogues. By using  the bundle $ J^1(\RR{},\RR{})\simeq\RR{3}(x,y,u)$ of $1$-jets of functions $\RR{}\mapsto\RR{}$ (where the jet coordinate $u$ corresponds to $\frac{dy}{dx}$) as well as the bundle $ J^2(\RR{},\RR{})\simeq\RR{4}(x,y,u,v)$ of $2$-jets of functions $\RR{}\mapsto\RR{}$, we may interpret the ODE $\mathcal E$  as a hypersurface in $J^2(\RR{},\RR{})$.
For ODEs \eqref{ODE} considered locally near the origin, we study the problem of their classification  under  {\em point transformations}, i.e. local self-transformations of $\RR{2}$, sending families of solutions of two ODEs into each other, and moreover,  sending into each other the given distinguished solutions.
%the graphs of the distinguished solutions into each other. 
%(The latter is equivalent to saying that the prolongation $H^{(2)}$ of $H$ to $J^2(\RR{},\RR{})$ transforms the submanifolds $\mathcal E,\mathcal E^*$ respectively into each other). 

The natural classification problem under discussion was first considered in the celebrated paper of Tresse \cite{tresse} who found a complete system of {\em relative differential invariants} for the problem (a relative invariant of order $k$ is a function $I$ on the space $j^k(\RR{3},\RR{})$ of $k$-jets of defining functions $F(x,y,u)$, as in \eqref{ODE}, such that for any point diffeomorphism $H$, the transformed equation 
$y'' = \tilde F(x,y, y')$  
satisfies an identity 
$$
	I(j^k \tilde F)=\lambda\cdot I(j^k F)
$$ 
for an appropriate {\em non-vanishing} factor $\lambda(x,y,u)$).  In particular, the lowest possible order of a relative differential invariant equals to $4$, and the identical vanishing of the (only) two order $4$ invariants  
\begin{equation}\label{inv}
\begin{aligned}
I_1(F):=&F_{u^4}, \\ 
I_2(F):=&D^2\bigl(F_{u^2}\bigr)-F_{u}D\bigl(F_{u^2}\bigr)-4D\bigl(F_{yu}\bigr)+6F_{y^2}-3F_yF_{u^2}+4F_{u}F_{yu}
\end{aligned}
\end{equation}
is necessary and sufficient for $\mathcal E$ to be locally equivalent to the  {\em flat ODE}
\begin{equation}\label{flat}
y\rq{}\rq{}=0.
\end{equation}
Here $D$ is the total differentiation operator 
\begin{equation}\label{DD}
D:=\frac{\partial}{\partial x}+u\frac{\partial}{\partial y}+F\frac{\partial}{\partial u}.
\end{equation}
  In this way, Tresse obtained a complete and effective solution for the problem of ``flattening'' an ODE \eqref{ODE}. It should be mentioned that, in fact, the flattening problem here is equivalent to the {\em linearization problem}, as every linear ODE \eqref{ODE} is always transformable into the flat ODE \eqref{flat} in a neighborhood of any point where it is defined (see e.g.\ \cite{arnoldgeom}). Importantly, Tresse's relative invariants allow for producing {\em absolute} invariants for the equivalence problem, and subsequently solving the equivalence problem itself. We refer to Kruglikov \cite{kruglikov} for more details here, as well as for a modern treatment of Tresse's theory.

The classification problem in its full generality was further attacked in the work of E.\,Cartan \cite{cartanODE}, who considered the problem in the framework of the equivalence problem for $G$-structures. In this work, Cartan successfully applied his 
%freshly invented 
method of reducing to the equivalence problem for $\{e\}$-structures. 
In particular, he re-discovered the two Tresse's fundamental invariants \eqref{inv}.  Cartan's approach allows for deciding on the equivalence of {\em any} two given ODEs \eqref{ODE} {\em in principle}. 
However in practice, applying Cartan's method requires solving equations
defining Cartan's connection and subsequently, 
solving the equivalence problem for the $\{e\}$-structures
% the equivalence problem is that his reduction procedure followed by the arising problem of distinguishing two given 
% (which are given in this case on 
on $8$-dimensional projective bundles with fiber dimension $5$.
The latter steps may present significant technical difficulties. An example illuminating the difficulties here is the fact that even the particular class of {\em Painlev\'e differential equations} is not classified yet under point transformations. For various results on classifying classes of second order ODEs (including Painlev\'e equations), we refer to the work of  Kamran, Lamb and Sharwick \cite{kamran}, Babich and Bordag \cite{bb}, Bandle and Bordag \cite{bb1}, Dmitrieva and Sharipov \cite{ds},  Bagredina \cite{bagredina}, Kartak \cite{kartak}, Yumaguzhin \cite{yumagu}.

In view of the above, an effective  solution of the equivalence problem for ODEs \eqref{ODE} remains to be of great interest. A natural approach here is presenting a {\em normal form} for an ODE \eqref{ODE}. An important step 
% interesting approach here aiming to find an {\em effective} solution was suggested by 
in this direction is due to Arnold \cite{arnoldgeom}, 
who proved that, via a series of geometrically motivated transformations,
%simplifying certain geometric data, 
any ODE  \eqref{ODE} 
can be locally transformed into the (pre-)normal form 
\begin{equation}\label{arnolds}
y''=N(x,y,y')=A(x)y^2+O(y^3)+O(y'^3)
\end{equation}
for an appropriate (semi-)invariant function $A(x)$. (Here for a given monomial $x^ky^ly'^m$, we denote by $O(x^ky^ly'^m)$  a respectively smooth or anaytic function of the kind $x^ky^ly'^m\cdot \lambda(x,y,y')$). We note, however, that Arnold's normal form 
\eqref{arnolds}
is {\em incomplete} in the sense that the same ODE can have an infinite-dimensional space of normal forms, as illustrated by the following example:
%Thus, it is not possible to distinguish ODEs \eqref{ODE} by constructing their normal form \eqref{arnolds}. 
%This can be seen from the following

\medskip

\noindent {\bf Example.} Let us consider the class of ODEs
\begin{equation}\label{lin}
y''= xV(y)(y')^3,
\end{equation}
where $V(y)$ is any (smooth or analytic) function defined near the origin.
%, near its zero solution. 
Then \eqref{lin} is in Arnold's (pre-)normal form \eqref{arnolds}. However, {\em any} ODE \eqref{lin} is diffeomorphic to the flat ODE \eqref{ODE} near any solution. 
Indeed,
%This can be argued as follows. 
in a neighborhood of any solution with $y'\neq 0$, we can switch
$x$ and $y$
% the dependent and the independent variables, 
and a straightforward calculation shows that then \eqref{lin} becomes
$$x''=-V(y)x,$$
i.e. our ODE is {\em linear} in the new (exchanged) coordinates. 
This proves that \eqref{lin} can be linearized by the diffeomorphism 
$(x,y)\mapsto (y,x)$, and hence \eqref{lin} can be further flattened. The fact that it can be flattened in a neighborhood of a solution with  vanishing $y'$ follows by uniqueness from Tresse's theorem on the basic invariants \eqref{inv}. We conclude that the flat ODE \eqref{ODE} has an infinite dimensional space \eqref{lin} of Arnold's (pre-)normal forms. 

\medskip

In this way, Arnold's normal form does {\em not} solve the equivalence problem, and the arising problem of completely classifying ODEs \eqref{ODE} 
(up to finitely many parameters) via a normal form still remains open.  
%(though we point out that Arnold's pre-normal form allows for a partial classification and an interesting interplay with Tresse's theory, see \cite{arnoldgeom}). In particular, the problem of finding a natural and {\em complete} normal form for ODEs \eqref{ODE} remained open.
It is the main goal of this paper is to provide such a complete normal form,
thereby providing a solution to the classification problem.

\subsection{Complete normal form for a second order ODE}  To formulate our result in detail, we consider  the canonical $1$-form
$$\omega:=dy-udx$$
 in the $1$-jet space $J^1(\RR{},\RR{})$. It determines the canonical $2$-distribution 
\begin{equation}\label{PI}
\Pi=\{\omega=0\}.
\end{equation}
% under discussion, 
% by constructing a complete convergent normal form for ODEs \eqref{ODE}.   
Now our main can be formulated is as follows.  

\begin{theorem}\label{theor1}
Let 
\begin{equation}\label{ode}
	y'' = F(x,y,y')
\end{equation}
 be a real $C^\infty$ smooth (resp.\ holomorphic) second order ODE, defined in a neighborhood $U$ of an element  
 $J_0:=(x_0,y_0,u_0)\in J^1(\RR{},\RR{})$ 
 (resp.\ $J_0:=(x_0,y_0,u_0)\in J^1(\CC{},\CC{})$). 
 Let  $\Pi_0\subset J^1(\RR{},\RR{})$ be the value of the distribution \eqref{PI} at $(x_0,y_0,u_0)$. Then there exists local $C^\infty$ smooth (resp.\ holomorphic)
coordinates in $\RR{2}$ (resp.\ $\CC{2}$),
defined in a neighborhood of $(x_0,y_0)$, transforming the element 
 $J_0$ into the origin in the corresponding jet space, 
 and the ODE \eqref{ode} into the normal form
 \begin{equation}\label{normalODE}
	y\rq{}\rq{}=N(x,y,y'), 
	\quad
	N=O\bigl(x^2y\rq{}^2\bigr)+O\bigl(y\rq{}^4\bigr).
\end{equation}
A transformation  $H=(f,g)$ bringing 
\eqref{ode}
into a normal form 
%in a $(x_0,y_0,u_0)$ 
is uniquely determined by 
the restriction at $J_0$ of the differential of the prolonged map $H^{(1)}$ onto $\Pi_0$, and the transverse second order derivative of  
$H^{(1)}$ at $J_0$.
In turn, in any coordinates where $(x_0,y_0,u_0)$ is the origin, this amounts to a choice of the five parameters
\begin{equation}\label{mapdata}
f_x(0,0)=s,\quad   g_y(0,0)=t,\quad f_y(0,0)=\alpha,\quad f_{xx}(0,0)=\beta,\quad g_{yy}(0,0)=r,\quad s,t,\neq 0.
\end{equation}

If the ODE \eqref{ode} is real-analytic, then the normal form \eqref{normalODE} as well as the normalizing transformation are real-analytic.

\end{theorem}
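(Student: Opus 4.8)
\emph{Strategy.} The plan is to pass through the classical correspondence between second order ODEs and Levi-nondegenerate hypersurface-type CR structures. Regarding the two-parameter family of solutions of \eqref{ode} as a family of (complexified) curves and adjoining the conjugate family, one attaches to \eqref{ode} a CR (Segre) structure on which point transformations of the ODE act precisely as CR-equivalences; the contact datum $\omega$ and the distribution $\Pi$ of \eqref{PI} supply exactly the first-order geometric data of this structure at $J_0$. Under this dictionary the flat equation $y''=0$, whose solutions are straight lines, corresponds to the hyperquadric, and its eight-dimensional point-symmetry algebra $\mathfrak{sl}(3)$ is matched, after complexification, with the automorphism algebra $\mathfrak{su}(2,1)$ of the hyperquadric. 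The theorem then becomes a transcription of the Chern--Moser normal form, and I would organize the proof so that existence, the precise shape \eqref{normalODE}, uniqueness, and convergence are each read off from the corresponding feature on the CR side.

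\emph{Normalization and the shape of the normal form.} Expanding $F=\sum_m F^{(m)}$ into components homogeneous with respect to the weighting in which the quadric model is homogeneous, the prolonged transformation $H^{(1)}$ acts on each component affinely, $\widetilde F^{(m)} = F^{(m)} + L\,h^{(m)} + (\text{contributions of already-normalized lower-weight terms})$, where $L$ is the homological operator given by the infinitesimal action of the projective isotropy algebra. Normalizing amounts to choosing at each weight a fixed complement to $\operatorname{Im} L$ and solving for $h^{(m)}$ in its image; the heart of this step is the purely algebraic verification that the complement is spanned exactly by the monomials occurring in $O(x^2y'^2)+O(y'^4)$. These are the images, under the dictionary, of the Chern--Moser normal terms for a hypersurface in $\CC{2}$, whose lowest nontrivial occurrence matches the vanishing to order four recorded in \eqref{normalODE}. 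This refines Arnold's reduction \eqref{arnolds}: the extra trace-type conditions are exactly what the latter omits, which is why distinct Arnold forms such as \eqref{lin} may represent the same equation, and Tresse's criterion \eqref{inv} is consistent with the rigidity, familiar from Chern--Moser theory, that vanishing of the lowest-order invariants already forces $N\equiv 0$.

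\emph{Uniqueness via isotropy.} After the normal form is reached, the residual freedom consists of the transformations preserving \eqref{normalODE}, and these correspond to the stabilizer in $\mathfrak{su}(2,1)$ of the reference contact element, i.e. the five-dimensional isotropy of the hyperquadric at a point. Unwinding this isotropy through the prolongation map shows that a normalizing $H=(f,g)$ is pinned down by the restriction of $dH^{(1)}$ to $\Pi_0$ together with the second-order datum of $H^{(1)}$ transverse to $\Pi_0$; in coordinates in which $J_0$ is the origin these are precisely the five numbers $f_x(0),g_y(0),f_y(0),f_{xx}(0),g_{yy}(0)$ of \eqref{mapdata}, with $f_x(0),g_y(0)\neq 0$ forced by $H$ being a local diffeomorphism compatible with $\Pi_0$. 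Conversely, every admissible choice of these five numbers is realized, giving the asserted parametrization of the normalizing transformations.

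\emph{Convergence, and the main obstacle.} I expect the delicate point to be convergence: the order-by-order construction above is formal, and its normalizing series need not a priori converge. This is exactly where routing through CR geometry pays off, since the Chern--Moser normal form is convergent for real-analytic Levi-nondegenerate structures. I would therefore deduce real-analyticity of both $N$ and $H$ from Chern--Moser convergence on the associated structure, once it is checked that the ODE-to-CR dictionary is itself real-analytic and that embeddability and reality subtleties (a general real-analytic ODE need not arise from an actual embedded hypersurface in $\CC{2}$) are bypassed by working with the complexified Segre structure rather than with an embedded hypersurface. Establishing this dictionary rigorously, and transporting the Chern--Moser estimates across it, is the principal technical burden of the proof. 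The $C^\infty$ statement then follows from the formal normalization together with a Borel-type realization, using the smooth dependence of the normalizing jets on the five parameters.
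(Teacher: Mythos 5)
Your overall strategy --- pass to an associated CR-type structure, run a Poincar\'e--Moser homological normalization weight by weight with the projective isotropy supplying the five free parameters, and then import convergence from Chern--Moser --- is close in spirit to the paper, whose formal part (Propositions \ref{unique} and \ref{formal}) is indeed a homological argument and whose count of residual parameters matches your isotropy computation. But there are two genuine gaps. First, the dictionary you rely on does not exist in the form you need it: a general second order ODE (even a holomorphic one) does \emph{not} arise as the Segre family of a real hypersurface in $\CC{2}$; only ODEs satisfying an additional reality/involutivity condition do. What one actually gets is a ``para-CR'' object --- in the paper, the \emph{manifold of solutions} $\mathcal M=\{y=\Phi(x,a,b)\}\subset\RR{4}$ carrying two transverse foliations and acted on by the decoupled group \eqref{coupled} --- and Chern--Moser's convergence theorem does not apply to it as stated. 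You flag ``transporting the Chern--Moser estimates across the dictionary'' as the principal burden, but that transport is essentially the entire content of the theorem; the paper does not cite Chern--Moser's result at all, and instead re-proves convergence from scratch in the para-CR setting by intrinsically defining chains (Section 4) and then executing a finite sequence of explicit geometric normalizations (Section 5: straightening a chain, straightening the leaves of the two foliations along it, fixing parameterizations, and solving scalar ODEs such as \eqref{find22} and \eqref{functions} for the remaining reparameterizations), each step manifestly preserving smoothness or analyticity.

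Second, your treatment of the $C^\infty$ case fails outright. The normal form conditions are not conditions on finitely many Taylor coefficients at the reference point: in the notation of Remark \ref{approx} they require $F_{k0}(y)=F_{k1}(y)=0$ for all $k$ and $F_{02}=F_{03}=F_{12}=F_{13}=0$ as \emph{identities in $y$} near $0$ (equivalently, the conditions \eqref{normalform} on the coefficient functions $\Phi_{kl}(b)$). A Borel-type realization of the formal normalizing series produces a smooth map that puts the equation in normal form only to infinite order at the single point $J_0$, leaving a flat but nonzero error, so it does not prove the existence statement. This is precisely why the paper's smooth existence proof is not a jet-by-jet argument but the five-step geometric construction culminating in \autoref{convergent}, where each normalization condition is achieved exactly (not just asymptotically) by solving an ODE or performing an explicit substitution. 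To repair your proposal you would need either to establish a genuine convergent para-CR analogue of Chern--Moser (which is what the paper does) or to replace the Borel step by an exact normalization mechanism valid for $C^\infty$ data.
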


\begin{remark}\label{approx}
In case the ODE $\mathcal E$ in \autoref{theor1} is respectively $\RR{}$- or $\CC{}$-analytic, the normal form \eqref{normalODE} reads as
\begin{equation}\label{normalNkl}
y''=\sum_{k,l\geq 0} F_{kl}(y)x^k(y')^l, \quad \mbox{where}\quad F_{k0}=F_{k1}=0,\,k\geq 0, \quad F_{02}=F_{03}=F_{12}=F_{13}=0,
\end{equation}
and the functions $F_{kl}$ are respectively $\RR{}$- or $\CC{}$-analytic. 
\end{remark}

As an application of the (convergent) normal form, we discover the following canonical object associated with an ODE \eqref{ODE}.

\begin{theorem}\label{Thmchains}
In the setting of \autoref{theor1}, there exists a distinguished smooth (resp. holomorphic) invariant $2$-parameter family of pairwise transverse curves in $U$ called chains, all of which are transverse to the distinguished $2$-plane $\Pi_0$. In any direction $d$ in $J^1(\RR{},\RR{})$ 
 (resp. $J^1(\CC{},\CC{})$) transverse to $\Pi_0$, there exists a unique chain passing through $(x_0,y_0,u_0)$. Each chain through $(x_0,y_0,u_0)$ takes the form
$$\bigl\{x=u=0\bigr\}$$
in some normal form 
at the point $(x_0,y_0,u_0)$. 
If the ODE \eqref{ode} is in addition real-analytic, 
then the family of chains is also real-analytic.
\end{theorem}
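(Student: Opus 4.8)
The plan is to build the chains by exploiting the uniqueness part of \autoref{theor1}, which is the real engine here. Given the reference point $(x_0,y_0,u_0)$ and a transverse direction $d$, I will define a chain as the locus cut out by $\{x=u=0\}$ in a suitably chosen normal form. The first step is to understand exactly how the five-parameter freedom \eqref{mapdata} acts on the curve $\{x=u=0\}$: since this curve is the $y$-axis in the $1$-jet space of a normal-form coordinate system, I must track how it transforms under the prolongation $H^{(1)}$ of a normalizing map. The key observation is that the residual parameters $s,t,\alpha,\beta,r$ form the isotropy group of the flat model $y''=0$ (the projective-type automorphisms fixing $J_0$), and this group acts on the space of transverse directions $\Pi_0$. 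So I would first show that the curve $\{x=u=0\}$ in normal-form coordinates, pushed forward to the original coordinates, depends on the normalization only through the induced transverse direction $d$ at $(x_0,y_0,u_0)$.

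The second step is the existence-and-uniqueness dichotomy. For existence, given any transverse direction $d$, I will produce a normalizing transformation whose differential (restricted to $\Pi_0$) and transverse second-order data align so that the image of $\{x=u=0\}$ has tangent direction $d$ at the reference point; this is a matter of checking that the parameter map from \eqref{mapdata} onto the space of transverse directions is surjective, which follows because $s,t$ are free nonzero scalars and $\alpha,\beta,r$ supply the remaining tilt. For uniqueness, I invoke the uniqueness clause of \autoref{theor1}: two normal forms differ by an element of the five-parameter isotropy group, and I must verify that the subgroup preserving both $J_0$ and the transverse direction $d$ stabilizes the curve $\{x=u=0\}$ setwise. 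This reduces to a direct computation with the prolonged action of the isotropy group on the $y$-axis.

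The third step is to assemble these pointwise chains into a genuine $2$-parameter family of pairwise transverse curves, with the promised invariance and regularity. Invariance is essentially automatic: the defining description ``$\{x=u=0\}$ in some normal form'' is coordinate-free modulo the model automorphisms, so any point transformation between two ODEs carries chains to chains. For the parameter count, the family is indexed by the reference point $(x_0,y_0,u_0)$ together constrained by transversality to $\Pi_0$; after fixing the base point the chains through it are parametrized by the projectivized transverse directions, giving the stated $2$-parameter family. Regularity and the analytic statement follow from the corresponding regularity of the normalizing transformation in \autoref{theor1}, since the chain is the image of a fixed coordinate line under a smooth (resp.\ real-analytic) map depending smoothly (resp.\ analytically) on the data.

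The main obstacle I anticipate is the uniqueness step: I need the action of the residual five-parameter group on prolonged $1$-jet space to be transparent enough that I can read off precisely which subgroup fixes a transverse direction and confirm that this subgroup preserves $\{x=u=0\}$. This requires an explicit description of how \eqref{mapdata} lifts to $H^{(1)}$ on $J^1$, in particular how $\alpha=f_y(0,0)$ and $r=g_{yy}(0,0)$ enter the transverse second-order derivative, and then showing the stabilizer of a direction acts on the $y$-axis by a mere reparametrization rather than bending it. Everything else is bookkeeping once this group-theoretic picture of the model's isotropy on chains is pinned down.
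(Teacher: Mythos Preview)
Your approach differs substantially from the paper's, and the step you yourself flag as the main obstacle is a genuine gap.

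The paper does not derive chains from the convergent normal form. It constructs them first and independently (Section~4): working on the manifold of solutions $\mathcal M$, it uses only the \emph{formal} normalization up to weight~$5$ to define a direction field $l(p^{(1)})$ on the $1$-jet bundle $\mathcal M^{(1)}$ of transverse curves, proves this field is smooth because only finitely many weighted components enter (Proposition~\ref{smooth}), and then integrates it. Chains are the projections of the integral curves, and their existence and uniqueness come from ODE uniqueness for the direction field. Only afterwards does Section~5 build the convergent normal form by first straightening a chosen chain; in particular, the uniqueness clause of Theorem~\ref{theor1} (see the proof of Theorem~\ref{convergent}) already invokes chain uniqueness to conclude that a map between normal forms preserving the direction of $\Gamma$ must preserve $\Gamma$ itself.

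Your proposed uniqueness argument conflates two different objects. You want to show that the stabilizer of a transverse direction inside the five-parameter family of normal-form-to-normal-form maps preserves $\{x=u=0\}$ setwise, and you suggest this is ``a direct computation with the prolonged action of the isotropy group on the $y$-axis.'' For the flat model this is indeed an easy check with the projective group~\eqref{G}. But for a general ODE the map $K=(f,g)$ between two of its normal forms is \emph{not} projective; it depends on all the higher coefficients of the normal form. The data \eqref{mapdata} determine $K$ uniquely, but not explicitly, so you cannot read off its action on the $y$-axis from the parameters alone. Concretely, you would need to show that $f_y(0,0)=f_{xx}(0,0)=0$ forces $f(0,y)\equiv 0$ and $g_x(0,y)\equiv 0$ identically; in the $C^\infty$ category this is not a finite-jet statement, and the paper establishes it only by appealing to the independently constructed chains. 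A secondary gap: smoothness of the family of chains requires the normalizing transformation to depend smoothly on the base point and the direction $d$, which Theorem~\ref{theor1} does not assert; the paper obtains this from the finite-order argument in Proposition~\ref{smooth}, not from the convergent normal form.
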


\begin{remark}\label{optimal}
We emphasize that the normal form in \autoref{theor1} is an {\em optimal} natural normal form for ODEs \eqref{ODE}, in the sense that a normalizing transformation is defined uniquely up to the stability group $G$ of the model ODE \eqref{flat} (i.e. the subgroup in the symmetry group preserving the distinguished point $(0,0,0)$ in $J^1(\RR{},\RR{})$). The latter group $G$ consists of the projective transformations
\begin{equation}\label{G}
x\mapsto \frac{a_1x+a_2y}{1+c_1x+c_2y},\quad y\mapsto \frac{b_1y}{1+c_1x+c_2y},\quad a_1,b_1\neq 0.
\end{equation}  
It is not difficult to set up a one-to-one correspondence between the parameters in the group \eqref{G} and the ones in \eqref{mapdata}. Thus, {\em the number of parameters determining a natural normal form for ODEs \eqref{ODE} can not be reduced further.}
\end{remark}

\begin{remark}\label{IviaNF}
It is not difficult to verify  that Tresse's fundamental invariants \eqref{inv} at a distinguished point $p$ are read from a normal form at this point as
\begin{equation}\label{findI}
I_1(F)=N_{(y')^4}(0,0,0), \quad I_2(F)=N_{x^2(y')^2}(0,0,0). 
\end{equation}
Further, it is not difficult to see that all the coefficients   $c_{jlm}$ for non-zero Taylor polynomials $c_{jlm}x^jy^ly'^m$ in the normal form \eqref{normalODE} can be expressed polynomially via the respective Tresse's semi-invariants of weight $k:=j+l+m$.
\end{remark}

\subsection{Special normal form} Next, we are able to provide a {\em unique} normal form for ODEs \eqref{ODE} satisfying a simple generic condition. Let us introduce

\begin{definition}\label{flatpoint} An ODE \eqref{ODE}  is called {\it flat} at a point  $(x_0,y_0,u_0)$ if 
$$I_1(F)|_{(x_0,y_0,u_0)}=I_2(F)|_{(x_0,y_0,u_0)}=0.$$
On the other hand, an ODE is called {\it strongly nonflat} at the point $(x_0,y_0,u_0)$, if
$$I_1(F)|_{(x_0,y_0,u_0)}\neq 0,\quad I_2(F)|_{(x_0,y_0,u_0)}\neq 0.$$
If finally, only one of the above values vanishes, an ODE is called {\em semi-flat} at $(x_0,y_0,u_0)$.
\end{definition}
As follows from the theory of Tresse, \autoref{flatpoint} is invariant under diffeomorphisms. Moreover, 
   \emph{an ODE $\mathcal E$, as in \eqref{ODE}, defined near a point $(x_0,y_0,u_0)$ is  equivalent to the flat ODE \eqref{flat} near $(x_0,y_0,u_0)$ if and only if there exists a neighborhood of $(x_0,y_0,u_0)$ in $ J^1(\RR{},\RR{})$ at each  point of which the ODE $\mathcal E$ is flat}. 
   
Clearly, by performng a scaling in $x,y$ the constants $F_{22}(0),F_{04}(0)$ in \autoref{approx} can be normalized to be equal to $1$ at a strongly nonflat point. Moreover, it turns out that at a strongly nonflat point the normal form \eqref{normalODE} can be chosen uniquely, up to a $\mathbb{Z}_n$ action.
   
 \begin{theorem}\label{snf}
 Let $\mathcal E$, as in \eqref{ODE}, be a real $C^\infty$ smooth (resp. $\CC{}$-analytic) second order ODE, defined in a neighborhood $U$ of an element  $(x_0,y_0,u_0)\in J^1(\RR{},\RR{})$ (resp. $(x_0,y_0,u_0)\in J^1(\CC{},\CC{})$). Assume that the point $(x_0,y_0,u_0)$ is strongly nonflat. Then  there exists a $C^\infty$ smooth (resp. holomorphic)  map $H$ of $(\RR{2},0)$ (resp. $(\CC{2},0)$) transforming the element $(x_0,y_0,u_0)$ to the origin and the ODE \eqref{ODE} to the special normal normal form
$$ y\rq{}\rq{}=N(x,y,y')=O\bigl(x^2y\rq{}^2\bigr)+O\bigl(y\rq{}^4\bigr),$$
where, in addition, 
\begin{equation}\label{sn}
N_{(y')^4}(0,0,0)=N_{x^2(y')^2}(0,0,0)=1,\quad N_{x^2(y')^3}(0,0,0)=N_{x(y')^4}(0,0,0)=N_{x^3(y')^3}(0,0,0)=0.
\end{equation}
In the real case, a normal form \eqref{sn} of $\mathcal E$ is unique up to the linear $\mathbb Z_2$ action
\begin{equation}\label{Z2}
x\mapsto -x, \quad y\mapsto y
\end{equation} 
on the normal forms. 
Furthermore, if the ODE $\mathcal E$ is real-analytic, then the normal form \eqref{normalODE} as well as the normalizing transformation are real-analytic.

In the complex case,  a normal form \eqref{sn} of $\mathcal E$ is unique up to the linear $\mathbb Z_8$ action
\begin{equation}\label{Z8}
x\mapsto \varepsilon x, \quad y\mapsto \varepsilon^{-2}y, \quad \varepsilon^8=1.
\end{equation} 
on the normal forms. 
 \end{theorem}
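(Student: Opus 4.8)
The plan is to use \autoref{theor1} as a black box: it brings $\mathcal E$ to the normal form \eqref{normalODE}, and the residual ambiguity is precisely the action of the $5$-parameter isotropy group $G$ from \eqref{G} on the space of normal forms. Writing $N=\sum c_{jlm}x^jy^l(y')^m$ as in \eqref{normalNkl}, I would recast the problem as follows: $G$ acts on the finite jet of coefficients $c_{jlm}$, and I must show that, at a strongly nonflat point, exactly the five coefficients singled out in \eqref{sn} can be driven to the prescribed values, with the leftover freedom being the announced finite group. First I would pass to the Lie algebra $\mathfrak g=\mathrm{Lie}(G)$. As the flag-stabilizing subalgebra of the point-symmetry algebra $\mathfrak{pgl}(3)$ of $y''=0$, it carries the basis $E_1=x\partial_x$, $E_2=y\partial_y$ (the two scalings, matching $s=f_x(0)$, $t=g_y(0)$ in \eqref{mapdata}) together with the three unipotent fields $S=y\partial_x$, $P_1=x^2\partial_x+xy\partial_y$, $P_2=xy\partial_x+y^2\partial_y$ (matching $\alpha,\beta,r$). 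Using the second-prolongation formula together with the deformation identity $\dot N=\eta^{(2)}|_{y''=N}-\xi N_x-\eta N_y-\eta^{(1)}N_{y'}$, I would compute the induced linear action of each generator on the coefficients $c_{jlm}$.

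Next I would introduce the scaling bigrading $p=2+j-m$, $q=l+m-1$ and the single weight $w=p+2q$, under which $E_1,E_2$ preserve $w$ while $S,P_1$ raise it by $1$ and $P_2$ raises it by $2$; this produces the triangular structure that organizes the whole argument. The normal-form constraints \eqref{normalNkl} guarantee that the lowest weight occurring is $w=4$, carried solely by $c_{004}=N_{(y')^4}(0)=I_1$ and $c_{202}=N_{x^2(y')^2}(0)=I_2$ (Remark \ref{IviaNF}). At weight $4$ I would use the two scalings: the character matrix $\left(\begin{smallmatrix}-2&3\\2&1\end{smallmatrix}\right)$ has determinant $-8$, so $c_{004}$ and $c_{202}$ can be normalized to $1$ precisely because $I_1,I_2\neq0$. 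Solving the residual scaling constraints $a_1^{-2}b_1^3=a_1^2b_1=1$ yields $a_1^8=1$, i.e.\ the $\mathbb Z_8$-action \eqref{Z8} over $\mathbb C$ and the $\mathbb Z_2$-action \eqref{Z2} over $\mathbb R$.

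I would then ascend by weight. At weight $5$ I would use the two weight-raising generators $S,P_1$ to kill $c_{203}=N_{x^2(y')^3}(0)$ and $c_{104}=N_{x(y')^4}(0)$; the prolongation computation should give $\partial_S c_{203}=-I_2$ and $\partial_{P_1}c_{104}=I_1$, with the off-diagonal entries vanishing because the normal form forces the relevant lower coefficients (such as $c_{103}$) to be zero, so this $2\times2$ block is diagonal with determinant $-I_1I_2$. Finally, at weight $6$ I would use the remaining generator $P_2$ to kill $c_{303}=N_{x^3(y')^3}(0)$, via $\partial_{P_2}c_{303}=-I_2\neq0$. Altogether the Jacobian of the five normalization conditions with respect to the five group parameters is block-triangular with determinant a nonzero multiple of $I_1^2I_2^3$; hence the normalization exists and the normalizing element of $G$ is unique up to the stabilizer of the normalized jet. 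That stabilizer is then pinned down: any $g\in G$ fixing the normalized form must, reading off its effect on the weight-$4$ coefficients (which depends only on the scaling part of $g$, since the unipotents strictly raise $w$), satisfy $a_1^8=1$; the nonzero diagonal at weights $5,6$ then forces its unipotent part to be trivial. Thus the stabilizer is exactly $\mathbb Z_8$ (resp.\ $\mathbb Z_2$), giving uniqueness up to \eqref{Z8} (resp.\ \eqref{Z2}), while the smooth/analytic regularity is inherited from \autoref{theor1}.

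The main obstacle I expect is the explicit prolongation computation for the projective generators $P_1,P_2$ and the verification that the three zeroing conditions decouple triangularly with diagonal entries proportional to $I_1,I_2$ — equivalently, that strong nonflatness is exactly the nondegeneracy condition making the residual $G$-action transverse to the five normalization slices. A secondary point requiring care is the passage from the infinitesimal, weight-by-weight solvability to the finite statement, and the clean identification of the residual discrete stabilizer as $\mathbb Z_8$ over $\mathbb C$ and $\mathbb Z_2$ over $\mathbb R$.
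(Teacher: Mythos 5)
Your strategy is correct and, at bottom, it is the same two--stage normalization the paper performs: use the scaling part of the residual five--parameter freedom to fix the two weight--four coefficients (the Tresse invariants $I_1,I_2$), with stabilizer $a_1^8=1$ (hence $\mathbb Z_8$ over $\CC{}$, $\mathbb Z_2$ over $\RR{}$), and then kill the three weight--five and weight--six coefficients by the unipotent parameters $\alpha,\beta,r$, which act by an affine--triangular shift with nonzero diagonal. The genuine difference is the arena. The paper's proof is literally ``combine \autoref{uptoZ2} with \autoref{convergent}'': all of the work happens in \autoref{uptoZ2}, on the associated manifold of solutions, where the residual group is the explicit coupled projective group generated by \eqref{g0}, \eqref{g+}, the five coefficients are $\Phi_{42}(0),\Phi_{24}(0),\Phi_{43}(0),\Phi_{34}(0),\Phi_{53}(0)$ as in \eqref{snm}, and the unipotent action is read off from the basic identity \eqref{identity} as the shift \eqref{changePhi}. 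Your prolongation/Lie--algebra computation on the ODE coefficients $c_{jlm}$ is the mirror image of this under \eqref{MtoE}. What your route buys is conceptual transparency: the diagonal entries of the unipotent block are nonzero multiples of $I_1$ and $I_2$ (the generators of \eqref{g+} are exact symmetries of the model, so their effect on the weight--$5,6$ coefficients must be proportional to the weight--$4$ data), which exhibits strong nonflatness as exactly the transversality condition for \emph{all five} normalizations --- something hidden in the paper's constants $1,-2,3$ in \eqref{changePhi}, which are computed only after $\Phi_{42}(0)=\Phi_{24}(0)=1$ has been imposed. What it costs is that your deformation formula computes the raw tangent to the full diffeomorphism orbit, and you must still project onto the normal--form slice (your remark about $c_{103}$ vanishing is where this enters); the paper gets that projection for free from the homological equation \eqref{mm}. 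One caveat applies to both arguments and your character--matrix computation is precisely where it surfaces: over $\RR{}$ the product $c_{004}\,c_{202}$ rescales by the positive factor $b_1^{-4}$, so the sign of $I_1I_2$ is a scaling invariant and both coefficients can be normalized to $+1$ only when $I_1I_2>0$; the real case of the statement (and of \autoref{uptoZ2}) silently assumes this, and in general one must allow a normalization such as $c_{004}=1$, $c_{202}=\pm 1$.
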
  
 For example, it is easy to verify from \autoref{snf} that two real ODEs of the kind
 $$y''=y'^2(x^2+y'^2)(1+cy), \quad c>0$$
 are all point inequivalent at the origin for different values of $c$.

Next, we obtain the following
 
 \begin{corollary}
For an ODE \eqref{ODE}, given at a strongly nonflat point $p$, there is a canonical way of choosing a distinguished curve transverse to the given solution (a {\em distinguished chain}). The distinguished chain is given by the equation $\{x=u=0\}$ in any special normal form \eqref{sn} at $p$.  
\end{corollary}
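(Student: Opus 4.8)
The plan is to extract the distinguished chain directly from the special normal form and then to verify that it is insensitive to the residual freedom left by \autoref{snf}. First I would invoke \autoref{Thmchains}: the chains through $p$ are exactly the curves that acquire the description $\{x=u=0\}$ in a normal form \eqref{normalODE}, and $\{x=u=0\}$ itself is the chain transverse to $\Pi_0$ in the direction $\partial/\partial y$. Since a special normal form \eqref{sn} is in particular a normal form \eqref{normalODE}, the locus $\{x=u=0\}$ read in any special-normalizing coordinates is a genuine chain; the entire content of the corollary is therefore that the curve obtained in $U$ in this way does not depend on which special normal form we use, i.e.\ that it is \emph{canonical}.

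To establish canonicity I would compare two local point transformations $H_1,H_2$ taking $\mathcal E$ to special normal forms at $p$. Since by \autoref{theor1} a normalizing transformation is uniquely pinned down by the data \eqref{mapdata}, and by the uniqueness part of \autoref{snf} any two special normal forms at a strongly nonflat point differ by an element $\sigma$ of the $\mathbb Z_2$-action \eqref{Z2} in the real case (resp.\ the $\mathbb Z_8$-action \eqref{Z8} in the complex case), the self-map $H_2\circ H_1^{-1}$ of the model equals such a $\sigma$; equivalently $H_2=\sigma\circ H_1$. Pulling the chain back by the prolonged maps and using functoriality of prolongation gives $(H_2^{(1)})^{-1}(\{x=u=0\})=(H_1^{(1)})^{-1}\bigl((\sigma^{(1)})^{-1}(\{x=u=0\})\bigr)$, so the two curves coincide for all such $H_1,H_2$ precisely when the prolongation $\sigma^{(1)}$ preserves the set $\{x=u=0\}\subset J^1$. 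Thus everything reduces to a single invariance check.

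This check is short. Under a point transformation $(x,y)\mapsto(\tilde x,\tilde y)$ the jet variable transforms by $\tilde u=d\tilde y/d\tilde x$. For \eqref{Z2}, $\tilde x=-x,\ \tilde y=y$ yields $\tilde u=-u$, so $\sigma^{(1)}\colon(x,y,u)\mapsto(-x,y,-u)$; for \eqref{Z8}, $\tilde x=\varepsilon x,\ \tilde y=\varepsilon^{-2}y$ yields $\tilde u=\varepsilon^{-3}u$, so $\sigma^{(1)}\colon(x,y,u)\mapsto(\varepsilon x,\varepsilon^{-2}y,\varepsilon^{-3}u)$. In either case each of the hyperplanes $\{x=0\}$ and $\{u=0\}$ is mapped to itself, hence $\{x=u=0\}$ is $\sigma^{(1)}$-invariant and the distinguished chain is well defined. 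For transversality I would note that, since every term of $N$ in \eqref{normalODE} carries a factor $(y')^2$, the function $y\equiv 0$ solves the normalized equation, so the distinguished solution is $\{y=u=0\}$ with tangent $\partial/\partial x$, whereas the distinguished chain $\{x=u=0\}$ has tangent $\partial/\partial y$; these span transverse directions in $J^1$, and the chain is transverse to $\Pi_0=\mathrm{span}(\partial/\partial x,\partial/\partial u)$ exactly as recorded in \autoref{Thmchains}.

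The only step needing genuine care is the second one: pinning down that two special-normalizing transformations differ exactly by an element of the finite residual group — which combines the rigidity of the normalizing map in \autoref{theor1} with the uniqueness statement of \autoref{snf} — and correctly prolonging that discrete action to the jet coordinate $u$. Once these are in hand the invariance of $\{x=u=0\}$ is immediate, so I expect no serious obstacle beyond this bookkeeping.
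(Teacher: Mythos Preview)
Your proposal is correct. The paper states this corollary without proof, treating it as an immediate consequence of \autoref{snf} and \autoref{Thmchains}; your argument is precisely the natural way to make that consequence explicit—namely, observing that $\{x=u=0\}$ is a chain in any normal form and then checking that the residual $\mathbb Z_2$ (resp.\ $\mathbb Z_8$) freedom in \autoref{snf}, once prolonged to $J^1$, acts diagonally on $(x,u)$ and hence preserves this curve. One small wording issue: $H_2\circ H_1^{-1}$ is not a ``self-map of the model'' but a map between two special normal forms; the point you actually need (and essentially state) is that by the parameter rigidity in \autoref{theor1} together with the analysis behind \autoref{snf}, any such map must coincide with one of the linear $\sigma$'s, and this is indeed the case.
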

For a generic ODE \eqref{sn}, it is possible to do a further normalization and get rid of the $\mathbb Z_n$ action on normal forms. We though do not provide here the respective computation and leave the details to the reader.
 
\subsection{Partial normal forms} We have now an important

\begin{remark}\label{important}
In fact, the normal form construction provided in the paper allows for the following useful application: two given ODEs can be distinguished by means of their {\em partial normal forms}, or {\em normal forms of a finite order $m,\,m \geq 5$}. (The precise integer $4$ comes from the fact that the first possible non-zero term in special the normal form \eqref{sn} has degree $5$ in $x,y,y'$). As we will see from the construction in Section 3, a partial normal form of order $m$ can be achieved by a sequence of merely $m+2$ {\em polynomial} transformations, where $k'$th transformation has  (weighted) degree $\leq k$. Calculating each of the transformations amounts to solving a (finite-dimensional) system of linear equations. A partial normal form of order $m$ allows to associate with two given near the origin ODEs $\mathcal E,\widetilde {\mathcal E}$ two {\em explicit} algebraic manifolds $\mathcal M_m,\widetilde{\mathcal M}_m$. If the manifolds $\mathcal M_m,\widetilde{\mathcal M}_m$ are distinct for some $m\geq 5$, we immediately conclude that the given ODEs  $\mathcal E,\widetilde {\mathcal E}$ are {\em non-equivalent} at the origin. In this way, we obtain an {\em effective} algorithm for distinguishing two given ODEs. For further details, we refer to the discussion in the end of Section 3. 
%and for \autoref{classify}, demonstrating the effectiveness of our partial normal form method.     
\end{remark}

An immediate intent, in view of \autoref{important}, is as follows.
\begin{problem}\label{painleve}
Compute a (partial) normal form \eqref{normalODE} for the class of Painlev\'e equations considered at a nonsingular point. 
\end{problem}
As discussed above,  solving \autoref{painleve} will imply a complete point classification of Painlev\'e equations. It will be also interesting to find applications of the normal form \eqref{normalODE} for Painlev\'e equations to {\em qualitative} problems related to them (for good expositions on the latter subject see, e.g., Clarkson \cite{clarkson}, Hinkannen and Laine \cite{hl}, Gromak, Laine and Shimomura \cite{gromak}).

\smallskip

\subsection{Overview of the method} The normal form construction given in the paper is inspired by the fundamental connection (dictionary) between Differential Equations and {\em CR-geometry}. Such a connection was first observed in the work of E.\,Cartan \cite{cartanODE,cartan} and Segre \cite{segre}, and was more recently revisited in a series of important publications due to Sukhov \cite{sukhov1,sukhov2}. The approach is based on a certain similarity between real submanifolds in complex space and {\em manifolds of solutions} of completely integrable systems of PDEs.  Application of Associated Differential Equations to studying CR-manifolds has recently led to important developments in CR-geometry (see, e.g., \cite{divergence},\cite{nonminimalODE},\cite{nonanalytic,analytic}). The present paper is probably the first work on applying the connection between CR-manifolds and Differential Equations in the other direction. Our construction  at a glance looks as follows. 

First, we are able to {\em reformulate}  the classification problem for ODEs in a totally different geometric language, by reducing it to the classification problem for so-called {\em manifolds of solutions} of ODEs, as described in Section 2. In Section 3, we study the geometry of manifolds of solutions and provide a {\em formal normal form} for them. For constructing the normal form, we use a development of the homological method due to Poincar\'e and Moser \cite{poincare}, \cite{chern}. In Sections 4 and 5, we prove the convergence of the normal form by employing a certain development of the well known in CR-geometry {\em Chern-Moser's chains}.    The final normalization result for manifolds of solutions is stated in \autoref{convergent}.

We were recently informed on an alternative approach to classifying second order ODEs in a paper in preparation due to A.\,Ottazzi and G.\,Schmalz \cite{os} (see also their earlier work \cite{os1}).

\bigskip

\begin{center}\bf
Acknowledgement 
\end{center}

\bigskip

The authors are sincerely grateful to Sergei Yakovenko for multiple useful discussions held during the preparation of the paper and his valuable suggestions. We also thank Boris Kruglikov for his useful comments on the text. The first author is supported by the Austrian Science Fund (FWF).
   
%   We may locally consider an ODE \eqref{ODE} as a germ of the $1$-jet space $ J^1(\RR{},\RR{})$ endowed, in addition, with the distinguished $1$-form 
%$$\omega_F:=du-F(x,y,u)dx.$$

%\begin{theorem}\label{frame}
% If   the ODE \eqref{ODE} is  strongly nonflat at a point $p\in J^1(\RR{},\RR{})$, then in a sufficiently small ball centered at $p$ the normal form \eqref{normalODE} provides $( J^1(\RR{},\RR{}),\omega_F)$ with a (disordered) invariant pair of canonical frames of the form
% $$\bigl\{L^x,L^{u},[L^x,L^{u}]\bigr\}.$$
% In turn, for a point $q\in J^1(\RR{},\RR{})$ the vector fields $L^x(q)$ and $L^{u}(q)$ look as 
% $$L^x(q)=\pm\dx, \quad L^{u}(q)=\pm\frac{\partial}{\partial u}$$
%  in 
% any special normal form of $\mathcal E$ at $q$ (see Section 2 for details).
%  This pair of frames solves the equivalence problem for this class of structures. 
%\end{theorem}

%The construction of the canonical frame is given in the end of Section 2.

\section{Manifolds of solutions} Let \eqref{ODE}
be a second order ODE with the right hand side $F$ being defined near a fixed point $(x_0,y_0,u_0)\in J^1(\RR{},\RR{})$.
(Fixing $u_0$ amounts to fixing a distinguished solution for \eqref{ODE}, as discussed above). Let us  choose some parameterization 
$$(a,b)\mapsto \bigl\{y=\Phi(x,a,b)\bigr\}$$
of the space of solutions near the distinguished solution given by a diffeomorphism
\begin{equation}\label{psi}
	\psi=(\psi_1,\psi_2) 
%	\colon  \bigl(\RR{2},(a_0,b_0)\bigr)
%	\mapsto\bigl(\RR{2},(u_0,y_0)\bigr), 	
	\quad \psi(a,b)=\left(y\rq{}(x_0),y(x_0)\right),
\end{equation}
sending abstract parameters $(a,b)$ 
into the standard ones that locally determine a solution.
 The associated submanifold
\begin{equation}\label{manifold}
\mathcal M=\bigl\{y=\Phi(x,a,b)\bigr\}
\end{equation}
of $\RR{4}$ is called \emph{an associated manifold of solutions} or \emph{an associated foliated submanifold} (note that this manifold is not uniquely associated with the initial ODE; any diffeomorphism in $a,b$ is allowed). The reason for the latter term comes from the fact that $\mathcal M$ possesses two natural foliations: $\mathcal S$ given by $a=const,\,b=const$ and corresponding to the solutions of \eqref{ODE}, and $\mathcal S^*$ given by $x=const,\,y=const$  and corresponding to the \emph{dual solutions} in the terminology of Cartan \cite{cartanODE}. Both foliations are invariant under the product group $\mbox{Diff}(\RR{2},0)\times\mbox{Diff}(\RR{2},0)$  consisting of transformations of the kind
\begin{equation}\label{coupled}
 \bigl(f(x,y),g(x,y),\lambda(a,b),\mu(a,b)\bigr)
 \end{equation}
 between manifolds of solutions. These two foliations enable us to define canonical $1$-dimensional subbundles $T^{\mathcal S}\mathcal M,\,T^{\mathcal S^*}\mathcal M$ of the tangent bundle $T\mathcal M$, fibers of which at any point $p=(x_0,y_0,a_0,b_0)\in\mathcal M$ are spanned  by the tangent vectors to the leaves  through the point $p$ of the foliations $\mathcal S$ and $\mathcal S^*$, respectively. Both subbundles are obviously invariant under transformations \eqref{coupled}. They also enable us to define 
   a canonical $2$-dimensional subbundle 
   $$T^C\mathcal M:=T^{\mathcal S}\mathcal M\oplus T^{\mathcal S^*}\mathcal M\subset T\mathcal M.$$ 
   It is immediate then that the subbundle $T^C\mathcal M$ is also invariant under transformations \eqref{coupled}. It is not difficult to see that coupled transformations \eqref{coupled} are precisely the ones preserving the structure
   $$T^C\mathcal M=T^{\mathcal S}\mathcal M\oplus T^{\mathcal S^*}\mathcal M\subset T\mathcal M.$$ 
   Using this structure, we can define on $\mathcal M$ a (nondegenerate!) canonical bilinear form 
   \begin{equation}\label{bilinear}
   \phi:\quad T^{\mathcal S}\mathcal M\times T^{\mathcal S^*}\mathcal M\lr T^M \big/T^C\mathcal M,
   \end{equation}
   given by Lie brackets of vector fields (local sections) $X\in T^{\mathcal S}\mathcal M,\,Y\in T^{\mathcal S^*}\mathcal M$. Its nondegeneracy can be easily checked in the coordinates \eqref{approximation} below.
   
\begin{remark}\label{globalparam}  
 We shall note that the mapping
   \begin{equation}\label{ME}
\chi:\,\,(x,a,b)\mapsto (x,\Phi(x,a,b),\Phi_x(x,a,b))
\end{equation}
establishes a natural diffeomorphism between a manifold of solutions and the $1$-jet space $J^1(\RR{},\RR{})$. Using this diffeomorphism, we can consider, for a fixed ODE $\mathcal E$, germs of the same manifold $\mathcal M$ at all possible points $p\in\mathcal M$  as manifolds of solutions for $\mathcal E$ near the target points $\chi(p)$. This observation will be useful for constructing later in this section global distinguished curves in $\mathcal M$ called distinguished chains.
\end{remark}

 We now choose  certain convenient coordinates for a manifold of solutions in which the associated geometric data looks maximally simple at a reference point $p=(x_0,y_0,a_0,b_0)\in\mathcal M$. First, we perform  a  quadratic change of coordinates  $x\mapsto x-x_0,\,y\mapsto y+Q(x)$ after which the distinguished point $(x_0,y_0,u_0)$ becomes the origin and the distinguished solution satisfies $y\rq{}\rq{}(0)=0$. Hence we have in \eqref{ODE} $F(0,0,0)=0$. Next, we require that an above parameterization map $\psi$ satisfies
   $$\psi=(\psi_1,\psi_2):\, (\RR{2},0)\mapsto(\RR{2},0),\quad d\psi(0,0)=\mbox{Id}, \quad (\psi_2)_{aa}(0,0)=0$$
   (this can be achieved by a quadratic transformation in $(a,b)$). We call all parameterizations with these properties \emph{admissible}.

A fundamentally important corollary of these preparations is as follows. Let us introduce for the coordinates of the space $\RR{4}_{x,y,a,b}$ the weights
\begin{equation}\label{weights}
[x]=[a]=1,\quad [y]=[b]=2.
\end{equation}
Then the submanifold \eqref{manifold} can be written as 

\begin{equation}\label{approximation}
\mathcal M=\Bigl\{y=b+ax +O(3)\Bigr\},
\end{equation}

\medskip
\noindent where we denote by $O(m)$ a function all Taylor polynomials of which have weights $\geq m$. Thus, 

\medskip

{\em all manifolds of solutions, as introduced above, admit the model}

\begin{equation}\label{model}
\mathcal Q=\bigl\{y=b+ax\bigr\} ,
\end{equation}
homogeneous with respect to the above choice of weights. The quadric $\mathcal Q$ is a manifold of solutions for the flat equation \eqref{flat}. We shall note that for a manifold of solutions \eqref{approximation} the fiber at the origin of the subbundle $T^C\mathcal M$ looks as
\begin{equation}\label{Pi}
\Pi=\bigl\{y= 0,\,\,b=0\bigr\}
\end{equation}
and the canonical bilinear form \eqref{bilinear} looks as
\begin{equation}\label{L}
\phi(x,a)=ax.
\end{equation}

 Let now $\mathcal G$ denotes the group of diffeomorphisms of $(\RR{2},0)$, $2$-jet prolongations of which preserve the  origin in 
 %$\RR{2,1,1}$. 
  $J^2(\RR{},\RR{})$.
 Fix an ODE $\mathcal E$ as above and some $(f,g)\in\mathcal G$. Let $\tilde{\mathcal E}$ denotes the transformed (target) ODE, and $\tilde{\mathcal M}$ an associated manifold of solutions (with some admissible parameterization). Then it is immediate that there exists a unique $\bigl(\lambda(a,b),\mu(a,b)\bigr)\in\mathcal G$  such that the corresponding  diffeomorhism
\eqref{coupled}
 of $(\RR{4},0)$ onto itself transforms $(\mathcal M,0)$ into $(\tilde{\mathcal M},0)$. Conversely, if a coupled diffeomorphism \eqref{coupled} transforms $(\mathcal M,0)$ into $(\tilde{\mathcal M},0)$, then the map $\bigl(f(x,y),g(x,y)\bigr)$ transforms  $\mathcal E$ into $\tilde{\mathcal E}$. 
 
 We summarize our arguments in the following proposition.
 
\begin{proposition}\label{1to1}
 Transformations of two ODEs \eqref{ODE}  defined near the origin  with $F(0,0,0)=0$ are in 1-to-1 correspondence with product transformations from $\mathcal G\times \mathcal G$ of associated manifolds of solutions. 
\end{proposition}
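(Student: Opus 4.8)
The plan is to construct the correspondence explicitly in each direction and then check that the two constructions are mutually inverse. The geometric content I will exploit is that a manifold of solutions $\mathcal M=\{y=\Phi(x,a,b)\}$ is the incidence locus recording that the solution with parameters $(a,b)$ passes through the point $(x,y)$: its leaves of $\mathcal S$ (where $(a,b)$ is fixed) project to the graphs of the solutions of $\mathcal E$, while the transverse leaves of $\mathcal S^*$ (where $(x,y)$ is fixed) record all solutions through a given point. A point transformation $(f,g)$ acts on the solution graphs, and the reparametrization of the solution space it induces is exactly the pair $(\lambda,\mu)$ completing it to a coupled map \eqref{coupled}.

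\emph{Forward direction.} Assume $(f,g)\in\mathcal G$ transforms $\mathcal E$ into $\tilde{\mathcal E}$. By definition this means $(f,g)$ carries each solution graph of $\mathcal E$ onto a solution graph of $\tilde{\mathcal E}$; reading off the initial data of the image solution at the reference point and composing with $\tilde\psi^{-1}$ produces, for each $(a,b)$, the target parameters, which I take as $(\lambda(a,b),\mu(a,b))$. I would then verify that $(\lambda,\mu)$ is a local diffeomorphism of $(\mathbb R^2,0)$ fixing the origin --- this uses that $(f,g)$ is a diffeomorphism sending the distinguished solution to the distinguished solution --- and that its $2$-jet prolongation preserves the origin, so that $(\lambda,\mu)\in\mathcal G$; here the admissibility normalizations $d\psi(0,0)=\mathrm{Id}$, $(\psi_2)_{aa}(0,0)=0$ and their analogues for $\tilde\psi$ are precisely what matches the jet conditions defining $\mathcal G$. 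Finally, the incidence description gives $(x,y,a,b)\in\mathcal M$ if and only if $\bigl(f(x,y),g(x,y),\lambda(a,b),\mu(a,b)\bigr)\in\tilde{\mathcal M}$, so the coupled diffeomorphism \eqref{coupled} sends $(\mathcal M,0)$ onto $(\tilde{\mathcal M},0)$.

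\emph{Reverse direction and inverseness.} Conversely, let a coupled diffeomorphism \eqref{coupled} with $(f,g),(\lambda,\mu)\in\mathcal G$ carry $(\mathcal M,0)$ onto $(\tilde{\mathcal M},0)$. Since $\lambda,\mu$ depend only on $(a,b)$, this map sends each leaf $\{(a,b)=\mathrm{const}\}$ of $\mathcal M$ onto a leaf of $\tilde{\mathcal M}$; projecting to the $(x,y)$-plane, $(f,g)$ carries the family of solution graphs of $\mathcal E$ onto that of $\tilde{\mathcal E}$. Because a second order ODE is determined by its two-parameter family of solutions --- through every $1$-jet passes a unique solution, so the prolonged graphs sweep out the hypersurface $\mathcal E\subset J^2$ --- the prolongation of $(f,g)$ carries $\mathcal E$ onto $\tilde{\mathcal E}$, i.e.\ $(f,g)$ transforms $\mathcal E$ into $\tilde{\mathcal E}$. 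Both assignments are the identity on the $(f,g)$-component, so to conclude that they are inverse I only need uniqueness of the $(\lambda,\mu)$ attached to a given $(f,g)$: since distinct solutions carry distinct parameters (the parameterization is a diffeomorphism onto initial data), $(\lambda,\mu)$ is forced to send $(a,b)$ to the parameters of the image solution and no freedom remains. This yields the asserted $1$-to-$1$ correspondence.

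\emph{Main obstacle.} Beyond the formal bookkeeping, the two substantive points are, first, checking that the induced reparametrization really lands in $\mathcal G$, which is where the admissibility of $\psi,\tilde\psi$ must be carefully matched against the defining $2$-jet conditions of $\mathcal G$ and is the step most prone to error; and second, making rigorous the equivalence ``$(f,g)$ maps solutions to solutions'' $\Leftrightarrow$ ``$(f,g)$ transforms the ODE''. The latter rests on the nondegeneracy ensuring that near the reference point, where $F(0,0,0)=0$, the solution graphs foliate a full neighborhood and that the induced map of families is compatible with second prolongation; this is the heart of the argument, the remaining steps being direct consequences of the incidence picture.
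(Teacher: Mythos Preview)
Your proposal is correct and follows essentially the same approach as the paper: the paper's ``proof'' is the short paragraph preceding the proposition, which asserts as ``immediate'' that a given $(f,g)\in\mathcal G$ induces a unique $(\lambda,\mu)\in\mathcal G$ making the coupled map send $(\mathcal M,0)$ to $(\tilde{\mathcal M},0)$, and conversely that the $(f,g)$-component of a coupled map between manifolds of solutions transforms the ODEs. Your write-up is a faithful elaboration of that argument, and the point you single out as the main obstacle --- verifying $(\lambda,\mu)\in\mathcal G$ via the admissibility normalizations --- is exactly the step the paper leaves implicit.
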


 \smallskip

\section{Formal normal form for manifolds of solutions} According to the outcome of the  previous section, we approach the normal form problem for second order ODEs by constructing a normal form for the class of submanifolds \eqref{approximation} with respect to the group $\mathcal G\times \mathcal G$. For the latter problem, we start by studying the \emph{formal} version of this problem, that is, we replace the $C^\infty$ manifolds under consideration  by corresponding \emph{formal} submanifolds of $\RR{4}$, and the group of local $C^\infty$ diffeomorphisms $\mathcal G$ by the group $\mathcal G^f$ of \emph{formal} diffeomorphisms of $(\RR{2},0)$, $2$-jet prolongation of which preserves the origin in the jet space.
%$\RR{2,1,1}$.

We first note that the model manifold $\mathcal Q$ (as in \eqref{model}) is invariant under the group of projective transformations from $\mathcal G^f\times\mathcal G^f$. The group of such transformations can be written as \eqref{G} and is generated by the subgroups
\begin{equation}\label{g0}
x\mapsto sx,\quad y\mapsto sty, \quad a\mapsto ta,\quad b\mapsto stb,\quad s,t\in\RR{},\quad s,t,\neq 0
\end{equation} 
and
\begin{equation}\label{g+}
\begin{aligned}
x\mapsto & \frac{x+\alpha y}{1-2\beta x +(r+\alpha\beta)y}\quad y\mapsto \frac{y}{1-2\beta x +(r+\alpha\beta)y}, \\ a\mapsto & \frac{a+2\beta b}{1+\alpha a+(r+3\alpha\beta) b},\quad b\mapsto \frac{b}{1+\alpha a+(r+3\alpha\beta) b},\quad \alpha,\beta,r\in\RR{}
\end{aligned}
\end{equation} 
(it is easy to see from a straightforward computation). In fact, the first two components in \eqref{g0},\eqref{g+} generate the entire group of projective transformations of $\RR{2}$, preserving the origin and the line $y=0$. We will prove later that \eqref{g0},\eqref{g+} generate the whole group of formal invertible transformations in $\mathcal G^f\times\mathcal G^f$, preserving the quadric $\mathcal Q$.

Let now $\mathcal M=\bigl\{y=\Phi(x,a,b)\bigr\}$ and  $\mathcal M^*=\bigl\{y=\Phi^*(x,a,b)\bigr\}$ be any two formal submanifolds of $\RR{4}$, satisfying \eqref{approximation}. The fact that a formal transformation \eqref{coupled} transforms $\mathcal M$ into $\mathcal M^*$ amounts to
\begin{equation}\label{identity}
g(x,y)=\Phi^*(f(x,y),\lambda(a,b),\mu(a,b))|_{y=\Phi(x,a,b)}.
\end{equation}

Let us consider expansions of $f,g,\lambda,\mu,\Phi,\Phi^*$ in weighted homogeneous polynomials $f_k(x,y),g_k(x,y),\lambda_k(a,b),\mu_k(a,b),\Phi_k(x,a,b),\Phi_k^*(x,a,b)$ with respect to the above choice of weights. We have
$$f_0=g_0=0,\quad \lambda_0=\mu_0=0$$ 
(since our map preserves the origin). In addition, \eqref{approximation} gives 
$$\Phi_0=\Phi_1=0,\,\Phi_2=b+ax.$$
 Considering then in \eqref{identity} terms of weight $1$ we get also 
 $$g_1=0,\quad\mu_1=0.$$
 
 We next collect in \eqref{identity} (which we consider as an identity of power series in $x,a,b$) all terms of weight $2$. This gives (by comparing all possible monomials of weight $2$)
$$f_1=sx,\quad \lambda_1=ta,\quad g_2=sty,\quad \mu_2=stb,\quad s,t\in\RR{},\quad s,t,\neq 0$$
 (the last restriction follows from the invertibility of the map). It is not difficult to see from here that any map $H$, as in  \eqref{coupled}, can be uniquely factored as 
 \begin{equation}\label{factor}
 H=\tilde H \circ H_0,
 \end{equation}
 where $H_0$  belongs to the group generated by \eqref{g0},\eqref{g+}, and $\tilde H$ is normalized by
\begin{equation}\label{normalmap}
\begin{aligned}
f_x(0,0)=g_y(0,0)=1,\quad f_{y}(0,0)=f_{xx}(0,0)=g_{yy}(0,0)=0
\end{aligned}
\end{equation}
(recall that $g_x(0,0)=0$ holds for all maps from $\mathcal G^f\times\mathcal G^f$, as they preserve the origin in $J^1(\RR{},\RR{})$).
Hence, 
\bigskip

{\it in what follows for the normalization procedure we may consider only maps satisfying \eqref{normalmap}}.

\bigskip

 We denote the subgroup in  $\mathcal G^f\times\mathcal G^f$ normalized by \eqref{normalmap} by $\mathcal F$. We then consider for $H\in\mathcal F$ in \eqref{identity} all terms of a fixed weight $m\geq 3$.
 Then, in view of 
 $$f=x+O(2),\quad g=y+O(3),\quad \lambda=a+O(2),\quad \mu=b+O(3),$$ 
 we obtain:  
 
 \begin{equation}\label{mm}
 g_{m}(x,b+ax)-\mu_m(a,b)-af_{m-1}(x,b+ax)-x\lambda_{m-1}(a,b)=\Phi_m^*(x,a,b)-\Phi_m(x,a,b)+\dots,
 \end{equation}
 
 \medskip
 
 \noindent where dots stand for a polynomial expression in  $x,a,b$, and also in all $f_{j-1},g_j,\lambda_{j-1},\mu_j$ with $j<m$ and their derivatives up to order $m$. To investigate solvability of the equations \eqref{mm},  introduce the linear operator 
 $$\mathcal L\bigl(f(x,y),g(x,y),\lambda(a,b),\mu(a,b)\bigr):=g(x,b+ax)-\mu(a,b)-af(x,b+ax)-x\lambda(a,b),$$
 mapping the linear space 
 $$\mathcal F_3=\left\{H-\mbox{Id},\,H\in\mathcal F\right\}$$
   onto some linear subspace $\mathcal V$ in the space $\mathcal T_3$ of all formal power series in $x,a,b$ of the form $O(3)$. Note that the left hand side in \eqref{mm} has the form $\mathcal L(f_{m-1},g_m,\lambda_{m-1},\mu_m)$. If $\mathcal N$ denotes now any linear subspace in $\mathcal T_3$ such that 
   \begin{equation}\label{decompose}
\mathcal T_3=\mathcal V\oplus\mathcal N
\end{equation}
(a transverse subspace), then the requirement $\Phi_m^*\in\mathcal N$ enables one to solve each of the equations in \eqref{mm} uniquely in $(f_{m-1},g_m,\lambda_{m-1},\mu_m)$, provided all $(f_{j-1},g_j,\lambda_{j-1},\mu_j)$ with $j<m$ are already determined. Thus, in order to obtain a formal normal form for all manifolds of solutions as above, one needs only to choose an appropriate subspace $\mathcal N\subset\mathcal T_3$ with \eqref{decompose}. 

Let us use for elements $\Phi(x,a,b)\in\mathcal T_3$ expansions of the form
$$\Phi(x,a,b)=\sum_{k,l\geq 0}\Phi_{kl}(b)x^ka^l.$$ 
We then   introduce the subspace $\mathcal N\subset\mathcal T_3$ defined by the conditions
\begin{equation}\label{normalform}
\begin{aligned}
\Phi_{k0}=0,\,k\geq 0,\quad\Phi_{0l}=0,\,l\geq 1,&\quad  \Phi_{k1}=0,\,k\geq 2,\quad\Phi_{1l}=0,\,l\geq 1,\\
\Phi_{22}=\Phi_{23}=&\Phi_{32}=\Phi_{33}=0.
\end{aligned}
\end{equation}

\begin{proposition}\label{unique}
For the subspace $\mathcal N\subset\mathcal T_3$ defined by the conditions \eqref{normalform} one has the decomposition \eqref{decompose}. 
\end{proposition}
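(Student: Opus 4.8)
The plan is to reduce the statement to a weight-by-weight, purely linear-algebraic computation and then exhibit an explicit triangular (cascade) structure for the operator $\mathcal L$ that makes the complementarity transparent. First I would observe that $\mathcal L$ is homogeneous of weight $0$ with respect to the grading \eqref{weights}, so that both $\mathcal V=\mathrm{Im}\,\mathcal L$ and $\mathcal N$ are graded subspaces of $\mathcal T_3=\bigoplus_{m\ge 3}(\mathcal T_3)_m$. Hence it suffices to prove, for each fixed weight $m\ge 3$, the finite-dimensional decomposition $(\mathcal T_3)_m=\mathcal V_m\oplus\mathcal N_m$. Writing $\pi\colon(\mathcal T_3)_m\to(\mathcal T_3)_m/\mathcal N_m$ for the quotient projection and $P:=\pi\circ\mathcal L$, the decomposition \eqref{decompose} is equivalent to $P$ being a linear isomorphism from the weight-$m$ part of $\mathcal F_3$ onto $(\mathcal T_3)_m/\mathcal N_m$: surjectivity gives $\mathcal V_m+\mathcal N_m=(\mathcal T_3)_m$, while injectivity gives $\mathcal V_m\cap\mathcal N_m=0$ (if $\mathcal L(h)\in\mathcal N_m$ then $P(h)=0$, so $h=0$ and $\mathcal L(h)=0$). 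Thus the whole proposition comes down to showing that $P$ is bijective in each weight.

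Second, I would write $\mathcal L$ explicitly on monomials. Expanding $g(x,b+ax)$ and $f(x,b+ax)$ by the binomial theorem, the coefficient of $x^i a^j b^k$ in $\mathcal L(f,g,\lambda,\mu)$ is
$$[\mathcal L]_{ijk}=\binom{j+k}{j}\,g_{i-j,\,j+k}-\binom{j+k-1}{j-1}\,f_{i-j+1,\,j+k-1}-\delta_{i0}\,\mu_{jk}-\delta_{i1}\,\lambda_{jk},$$
where $\delta$ is the Kronecker symbol, $g_{pq}=f_{pq}=0$ for negative indices, and the binomial conventions make the $g$-term present only when $i\ge j$ and the $f$-term only when $j\ge 1$. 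The essential feature is the dependence on the power of $x$: the unknowns $\mu$ and $\lambda$ occur only in the rows $i=0$ and $i=1$ respectively, each with coefficient $-1$, while for $i\ge 2$ only $g$ and $f$ contribute. Since every monomial in $\mathcal N_m$ satisfies $i,j\ge 2$ by the definition \eqref{normalform}, the entire rows $i=0$ and $i=1$ must be eliminated. This yields a cascade: I first use the $(g,f)$-parameters to clear all non-normal coefficients with $i\ge 2$; this determines $g,f$, after which the resulting $i=1$ and $i=0$ coefficients are known and are cleared by the unique choices of $\lambda_{jk}$ and $\mu_{jk}$. As $\lambda,\mu$ never feed back into the rows $i\ge 2$, the triangular order shows that $P$ is bijective as soon as its restriction to the $(g,f)$-parameters and the non-normal $i\ge 2$ coefficients is bijective.

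Third, and this is the heart of the argument, I would analyze the $i\ge 2$ block along the diagonals $d:=i-j=\mathrm{const}$. The formula above shows that $g_{pq}$ acts only on the diagonal $d=p$ and $f_{pq}$ only on $d=p-1$; fixing $m$ and $d$ and setting $Q:=(m-d)/2$, exactly one parameter $g_{d,Q}$ and one parameter $f_{d+1,Q-1}$ act on each diagonal. By the precise exclusion of the block $\{2,3\}^2$ in \eqref{normalform}, the non-normal coefficients with $i\ge 2$ on that diagonal are exactly: the two monomials with $j\in\{0,1\}$ when $d\ge 2$; the pair from $(i,j)\in\{(2,1),(3,2)\}$ when $d=1$; the pair $(2,2),(3,3)$ when $d=0$; the single monomial $(2,3)$ when $d=-1$; and none when $d\le -2$. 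Thus on each diagonal the number of non-normal $i\ge 2$ coefficients equals the number of available $(g,f)$-parameters, and the associated transition matrix is $2\times 2$ (or $1\times 1$) with binomial entries. I would then compute these determinants and check nonvanishing for all admissible $Q$: for $d\ge 2$ the matrix is unitriangular with determinant $-1$, for $d=1$ the determinant is a nonzero multiple of $Q(Q-1)$, for $d=0$ a nonzero multiple of $Q(Q-1)^2(Q-2)$, and for $d=-1$ the single entry is $-\binom{Q-1}{2}\neq 0$. This establishes bijectivity of the $i\ge 2$ block, hence of $P$, hence the direct sum.

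The main obstacle is precisely this third step: one must verify that deleting the four terms $\Phi_{22},\Phi_{23},\Phi_{32},\Phi_{33}$ is exactly what equalizes the counts on every diagonal and renders every diagonal block square and invertible, so that any other choice would leave some block under- or over-determined. A secondary point requiring care is the low-weight behaviour ($m=3,4$), where the normalization \eqref{normalmap} kills the parameters $f_{0,1},f_{2,0}$ (weight $2$) and $g_{0,2}$ (weight $4$). Here one checks that at these weights the corresponding diagonals carry exactly one fewer non-normal coefficient, because the partner monomials $(3,2),(2,3),(3,3)$ do not yet occur for the small values of $Q$ involved; consequently the reduced blocks remain square and invertible and the cascade goes through unchanged. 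Granting the nonvanishing of the diagonal determinants together with these boundary checks, $P$ is a graded isomorphism, so $(\mathcal T_3)_m=\mathcal V_m\oplus\mathcal N_m$ for every $m\ge 3$, which proves \autoref{unique}.
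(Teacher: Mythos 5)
Your proposal is correct and follows essentially the same route as the paper: both arguments solve the homological equation $\mathcal L(f,g,\lambda,\mu)=\Psi$ modulo $\mathcal N$ by an explicit term-by-term cascade, first using $g$ and $f$ to clear the coefficients outside $\mathcal N$ with $x$-degree $\geq 2$ and then clearing the $x^1$- and $x^0$-rows with $\lambda$ and $\mu$. The only difference is bookkeeping: the paper groups all powers of $b$ together and solves ODEs in $b$ (e.g.\ \eqref{f0} and the system \eqref{system}) with initial data supplied by \eqref{normalmap}, whereas you work monomial-by-monomial along the diagonals $i-j$ with explicit $2\times 2$ determinants --- these are coefficient-wise the very same equations, and your low-weight boundary checks correspond exactly to the paper's use of the initial conditions $f_y(0)=f_{xx}(0)=g_{yy}(0)=0$.
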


\begin{proof} The statement of the proposition is equivalent to the fact that an
equation 
\begin{equation}\label{cm-eq}
\mathcal L(f,g,\lambda,\mu)=\Psi(x,y,a),\quad(f,g,\lambda,\mu)\in\mathcal F_3,\quad \Psi\in\mathcal T_3
\end{equation}
 in $(f,g,\lambda,\mu)$
has a unique solution, modulo $\mathcal N$ in the right-hand side,
for any fixed $\Psi$. Accordingly, we proceed with solving this equation modulo $\mathcal N$ in the right-hand side (that is, modulo the normalization condition \eqref{normalform}). We use expansions of the form
$$f(x,b+ax)=f(x,b)+f_y(x,b)ax+\frac{1}{2}f_{yy}(x,b)x^2a^2+\cdot\cdot\cdot,$$
and similarly for $g$. 
Substituting into \eqref{cm-eq} we get
the equation
\begin{multline} \label{cm-eq1} 
\left(g(x,b)+g_y(x,b)ax+\frac{1}{2}g_{yy}(x,b)x^2a^2+\frac{1}{6}g_{yyy}(x,b)x^3a^3+\cdot\cdot\cdot\right)-\\
 -a\left(f(x,b)+f_y(x,b)ax+\frac{1}{2}f_{yy}(x,b)x^2a^2+\cdot\cdot\cdot\right)-\\
- x\lambda(a,b)-\mu(a,b)=\Psi(x,a,b).
\end{multline}
For the sequel of the proof, we expand $f(x,y)$ as $f=\sum\limits_{k\geq 0}f_k(y)x^k,$ and
similarly for $g$, and also $\lambda(a,b)$ as $\lambda=\sum\limits_{k\geq 0}\lambda_k(b)a^k,$ and
similarly for $\mu$.    Collecting then in \eqref{cm-eq1} all terms with $x^0a^lb^m,\,l,m\geq 0$, we obtain
\begin{equation} \label{0l} 
g(0,b)-\mu(a,b)=\Psi(0,a,b).
\end{equation}
The equation \eqref{0l} enables us to determine uniquely the functions $\mu_l(b),\,l\geq 1$ from the data $\Psi_{0l},\,l\geq 1$. In addition, we have 
\begin{equation}\label{mu0}
\mu_0(b)=g_0(b)-\Psi_{00}(b).
\end{equation} 
Collecting then terms with $x^ka^0b^m,\,k,m\geq 0$, we obtain
\begin{equation}\label{k0}
g(x,b)-x\lambda(0,b)-\mu(0,b)=\Psi(x,0,b).
\end{equation}
The equation \eqref{k0} enables us to determine uniquely all $g_k(b),\,k\geq 2$ from the data $\Psi_{k 0},\,k\geq 1$.  In addition, we have the condition
\begin{equation}\label{g1}
g_1=\lambda_0+\Psi_{10}.
\end{equation}
Next, we collect terms with $x^1a^lb^m,\,l,m\geq 0$ and get
\begin{equation}\label{1l}
g_1(b)x+g_0\rq{}(b)ax-af_1(b)x-a^2f_0\rq{}(b)x-x\lambda(a,b)=\sum_{l\geq 0}\Psi_{1l}(b)xa^l.
\end{equation}
The equation \eqref{1l} enables us to determine uniquely all $\lambda_l(b),\,l\geq 3$ from the data  $\Psi_{1l},\,l\geq 3$.  In addition, we have 
\begin{equation}\label{lambda12}
\lambda_1=g_0\rq{}-f_1-\Psi_{11},\quad \lambda_2=-f_0\rq{}-\Psi_{12}.
\end{equation}
For the terms with $x^ka^1b^m,\,k,m\geq 0$ we obtain
\begin{equation}\label{k1}
g_y(x,b)ax-af(x,b)-x\lambda_1(b)a-\mu_1(b)a=\sum_{k\geq0}\Psi_{k1}(b)x^ka
\end{equation}
The equation \eqref{k1} enables us to determine uniquely all $f_k(b),\,k\geq 3$ from the data  $\Psi_{k 1},\,k\geq 3$.  In addition, we have 
\begin{equation}\label{f2}
f_2=g_1\rq{}-\Psi_{21}.
\end{equation}
Now we consider all terms with $x^2a^3b^m,\,m\geq 0$, and this gives
\begin{equation}\label{f0}
f_0\rq{}\rq{}=-2\Psi_{23}.
\end{equation}
In view of \eqref{normalmap}, \eqref{f0} enables us to determine uniquely the function $f_0$ from the data $\Psi_{23}(b)$. For the terms $x^3a^2b^m,\,m\geq 0$ we have
\begin{equation}\label{lambda0}
\frac{1}{2}g_1\rq{}\rq{}-f_2\rq{}=\Psi_{32}.
\end{equation}
In view of the conditions \eqref{normalmap}, the equations \eqref{g1},\eqref{f2} and \eqref{lambda0} enable us to determine uniquely the functions $f_2,g_1,\lambda_0$ from the data $\Psi_{32},\Psi_{10},\Psi_{21}$. 

It remains to consider the terms $x^2a^2b^m,\,m\geq 0$ and $x^3a^3b^m,\,m\geq 0$. For these ones we get the system:
\begin{equation}\label{system}
\begin{aligned}
\frac{1}{2}g_0\rq{}\rq{}-f_1\rq{}=\Psi_{22},\\
\frac{1}{6}g_0\rq{}\rq{}\rq{}-\frac{1}{2}f_1\rq{}\rq{}=\Psi_{33}.
\end{aligned}
\end{equation}
The system \eqref{system} enables us to determine uniquely the functions $f_1,g_0,\lambda_0$ from the data $\Psi_{22},\Psi_{33}$ (in view of \eqref{normalmap}). After that, the equations \eqref{mu0} and \eqref{lambda12} enable us to determine uniquely the functions $\mu_0,\lambda_1,\lambda_2$ from the data $\Psi_{00},\Psi_{11},\Psi_{12}$.

All the terms, concerned in the normalization conditions \eqref{normalform}, have been considered, and this proves the proposition.
\end{proof}

By this we have proved the main result of this section.

\begin{proposition}\label{formal}
For any formal manifold of solutions $\mathcal M$, as in \eqref{approximation}, and a tuple of real parameters $(s,t,\alpha,\beta,r)$, $s,t,\neq 0$, there exists a unique formal  transformation from $\mathcal G^f\times\mathcal G^f$ with 
\begin{equation}\label{parameters}
f_x(0)=s,\quad   g_y(0)=t,\quad f_y(0)=\alpha,\quad f_{xx}(0)=\beta,\quad g_{yy}(0)=r
\end{equation}
bringing $\mathcal M$ to a manifold of solutions in the normal form \eqref{normalform}.
\end{proposition}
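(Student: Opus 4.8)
The plan is to reduce, via the factorization \eqref{factor}, to normalizing within the subgroup $\mathcal F$, and then to run a weight-graded induction whose solvability at each step is furnished by Proposition~\ref{unique}.

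First I would dispose of the five parameters. Prescribing a tuple $(s,t,\alpha,\beta,r)$ with $s,t\neq 0$ amounts, by the one-to-one correspondence of Remark~\ref{optimal}, to prescribing a unique element $H_0$ of the projective group generated by \eqref{g0} and \eqref{g+}. Replacing $\mathcal M$ by its image $\mathcal M_0$ under $H_0$ — which is again of the form \eqref{approximation}, since the projective group preserves both the model quadric $\mathcal Q$ and the weight assignment \eqref{weights} — it suffices to produce a \emph{unique} $\tilde H\in\mathcal F$ carrying $\mathcal M_0$ to a submanifold satisfying the normalization \eqref{normalform}. Because every element of $\mathcal F$ realizes the trivial values \eqref{normalmap} of the five parameters, the composite $H=\tilde H\circ H_0$ then has exactly the prescribed data \eqref{parameters} and is the unique normalizing map with those parameters.

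The heart of the argument is the construction of $\tilde H$ order by order. The key observation is that $\mathcal L$ is homogeneous of weight $0$: inserting the weight-$m$ tuple $(f_{m-1},g_m,\lambda_{m-1},\mu_m)$ into $\mathcal L$ returns a weight-$m$ element of $\mathcal T_3$ (the substitution $y=b+ax$ carries weight $2$). Hence the single identity \eqref{identity} decouples into the weight-graded system \eqref{mm}, one equation per weight $m\geq 3$. Suppose inductively that all components of $\tilde H$ and of the target $\Phi^*$ of weight $<m$ have been determined. Then the remainder hidden in the dots of \eqref{mm} is a known polynomial in $x,a,b$ and in those lower-weight data, so \eqref{mm} takes the form $\mathcal L(f_{m-1},g_m,\lambda_{m-1},\mu_m)=\Phi_m^{*}-\Psi_m$ with $\Psi_m\in\mathcal T_3$ known. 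By Proposition~\ref{unique} the grading-respecting decomposition $\mathcal T_3=\mathcal V\oplus\mathcal N$ holds with $\mathcal V=\operatorname{im}\mathcal L$, and its proof shows in addition that $\mathcal L$ is injective on $\mathcal F_3$. Decomposing $\Psi_m=v_m+n_m$ uniquely with $v_m\in\mathcal V$ and $n_m\in\mathcal N$ forces $\Phi_m^{*}:=n_m$ as the weight-$m$ part of the normal form, while $\mathcal L(f_{m-1},g_m,\lambda_{m-1},\mu_m)=-v_m$ has a unique preimage in $\mathcal F_3$ by injectivity. This determines the weight-$m$ data of both $\tilde H$ and $\Phi^{*}$ uniquely, closing the induction; letting $m$ run over $3,4,5,\dots$ produces the unique $\tilde H$, hence the unique $H$.

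The genuine analytic content — solvability together with uniqueness of the homological equation at every weight — is precisely Proposition~\ref{unique}, which is already in hand, so I do not expect a serious obstacle here. The points that do require care are purely structural: verifying that the factorization \eqref{factor} isolates exactly the five parameters into the projective factor $H_0$ and that $H_0$ leaves the normalized shape \eqref{approximation} intact, and checking that the remainder in \eqref{mm} depends only on strictly lower-weight data so that the recursion is well-founded. Both follow once the weight-homogeneity of $\mathcal L$ and the projective invariance of $\mathcal Q$ are recorded.
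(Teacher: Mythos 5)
Your proposal is correct and follows essentially the same route as the paper: factor $H=\tilde H\circ H_0$ as in \eqref{factor} to absorb the five parameters $(s,t,\alpha,\beta,r)$ into the projective factor, then solve the weight-graded homological equations \eqref{mm} inductively, with existence and uniqueness modulo $\mathcal N$ at each weight supplied by Proposition~\ref{unique}. The paper itself treats Proposition~\ref{formal} as an immediate consequence of that setup ("By this we have proved the main result of this section"), so your slightly more explicit bookkeeping of the decomposition $\Psi_m=v_m+n_m$ and the injectivity of $\mathcal L$ is just a fuller writing-out of the same argument.
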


As an application, we obtain the description of the automorphism group of the model \eqref{model}.

\begin{corollary}
The group of formal invertible  transformations  from $\mathcal G^f\times\mathcal G^f$, mapping the quadric \eqref{model} onto itself, is generated by the projective transformations \eqref{g0},\eqref{g+}. 
\end{corollary}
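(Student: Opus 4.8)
The plan is to deduce the corollary immediately from the uniqueness half of \autoref{formal} combined with the factorization \eqref{factor}, without any new computation. The key preliminary observation I would record is that the quadric $\mathcal{Q}=\{y=b+ax\}$ is \emph{itself} already in the normal form \eqref{normalform}: its defining function $\Phi=b+ax$ carries no terms of weight $\geq 3$, so trivially $\Phi-(b+ax)=0$ lies in the transverse subspace $\mathcal{N}\subset\mathcal{T}_3$. Thus the model is a fixed point of the entire normalization procedure, and the identity map realizes the parameter tuple $(s,t,\alpha,\beta,r)=(1,1,0,0,0)$ from \eqref{parameters}.

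Next I would start with an arbitrary formal invertible $H\in\mathcal{G}^f\times\mathcal{G}^f$ mapping $\mathcal{Q}$ onto itself. Using \eqref{factor}, I would write $H=\tilde H\circ H_0$, where $H_0$ belongs to the group generated by \eqref{g0}, \eqref{g+}, and $\tilde H\in\mathcal{F}$ satisfies the normalization \eqref{normalmap}, i.e.\ has parameters $(1,1,0,0,0)$. Since the projective transformations \eqref{g0}, \eqref{g+} preserve $\mathcal{Q}$ (as recorded at the start of this section), so do $H_0$ and $H_0^{-1}$; hence $\tilde H=H\circ H_0^{-1}$ also maps $\mathcal{Q}$ onto itself.

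Finally I would invoke uniqueness. Applying \autoref{formal} to $\mathcal{M}=\mathcal{Q}$ with the tuple $(1,1,0,0,0)$ yields exactly one transformation in $\mathcal{G}^f\times\mathcal{G}^f$ with these parameters that brings $\mathcal{Q}$ into the normal form \eqref{normalform}. Both the identity and $\tilde H$ realize these parameters and send $\mathcal{Q}$ to a manifold in normal form, namely to $\mathcal{Q}$ itself, which is already normalized. By the uniqueness assertion, $\tilde H=\mathrm{Id}$, so $H=H_0$ lies in the group generated by \eqref{g0}, \eqref{g+}, as claimed.

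I do not expect a genuinely hard step here: once the factorization and the uniqueness of the normal form are in hand, the argument is purely formal. The only point requiring care is the verification underlying the first paragraph, namely that $\mathcal{Q}$ lies in the prescribed slice $\mathcal{N}$ and that the identity exhibits the parameters $(1,1,0,0,0)$, which is precisely what licenses applying uniqueness to force $\tilde H=\mathrm{Id}$.
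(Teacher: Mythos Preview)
Your proposal is correct and follows exactly the argument the paper intends: the corollary is stated without proof immediately after \autoref{formal}, and your derivation via the factorization \eqref{factor} together with the uniqueness clause of \autoref{formal} applied to $\mathcal M=\mathcal Q$ (already in normal form) is precisely the implicit reasoning behind calling it a corollary.
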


Further, at a strongly nonflat point, we may obtain a unique normal form, up to a simple linear action of $\mathbb Z_2$ on normal forms.

\begin{proposition}\label{uptoZ2}
If a formal manifold of solution $\mathcal M$ is associated with a the germ at the origin of a (formal) ODE $\mathcal E$, as in \eqref{ODE}, and the origin is a strongly nonflat point for $\mathcal E$, then $\mathcal M$ can be brought to a special normal form, given by the conditions \eqref{normalform} supplemented by
\begin{equation}\label{snm}
\Phi_{42}(0)=\Phi_{24}(0)=1,\quad \Phi_{43}(0)=\Phi_{34}(0)=\Phi_{53}(0)=0.
\end{equation}
A special normal form \eqref{snm} for $\mathcal M$ is unique, up to the linear action
\begin{equation} \label{Z2decoupled}
x\mapsto -x, \quad y\mapsto y, \quad a\mapsto -a, \quad b\mapsto b
\end{equation}
of $\mathbb Z_2$ on the special normal forms in the real case, and up to the linear action
\begin{equation} \label{Z8decoupled}
x\mapsto \varepsilon x, \quad y\mapsto y, \quad a\mapsto \varepsilon^{-3}a, \quad b\mapsto b, \quad \varepsilon^8=1
\end{equation} 
in the complex case.
\end{proposition}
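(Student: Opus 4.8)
The plan is to obtain the special normal form by exhausting the residual parameter freedom in \autoref{formal}. By that proposition, the normal forms \eqref{normalform} of a fixed $\mathcal M$ are parametrised by the tuple $(s,t,\a,\b,r)$, $s,t\neq 0$, of \eqref{parameters}; achieving the additional conditions \eqref{snm} therefore means solving the five scalar equations $\Phi_{42}(0)=\Phi_{24}(0)=1$, $\Phi_{43}(0)=\Phi_{34}(0)=\Phi_{53}(0)=0$ for these five parameters, and the asserted non-uniqueness is precisely the stabiliser of such a solution. To make the dependence on the parameters transparent I would use the factorisation \eqref{factor}: the normalising map is $\tilde H\circ H_0$, where $H_0$ lies in the model group generated by \eqref{g0} and \eqref{g+} and carries the parameter data, while $\tilde H$ is the canonical $\mathcal F$-normalisation. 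Thus varying $(s,t,\a,\b,r)$ amounts to precomposing with $H_0$, and the whole task reduces to tracking the effect of \eqref{g0}--\eqref{g+} on the five coefficients $\Phi_{42}(0),\Phi_{24}(0),\Phi_{43}(0),\Phi_{34}(0),\Phi_{53}(0)$, organised by the weights \eqref{weights}.

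First I would treat the two weight-$6$ coefficients $\Phi_{42}(0),\Phi_{24}(0)$, which are the lowest-order nontrivial entries surviving \eqref{normalform}. By the dictionary between the ODE and the manifold normal forms (\autoref{IviaNF}), these are, up to nonzero constants, the two Tresse invariants $I_1,I_2$; hence strong nonflatness (\autoref{flatpoint}) is exactly the condition $\Phi_{42}(0)\neq 0$, $\Phi_{24}(0)\neq 0$. A direct computation shows the scaling \eqref{g0} multiplies them by $s^{-3}t^{-1}$ and $s^{-1}t^{-3}$ respectively and leaves every vanishing slot of \eqref{normalform} vanishing. Solving $s^{3}t=\Phi_{42}(0)$, $s t^{3}=\Phi_{24}(0)$ sets both coefficients to $1$; by nonvanishing this is solvable, and the solution $(s,t)$ is unique up to the finite group $\{(s,t):s^{3}t=s t^{3}=1\}$, which is $\mathbb Z_2$ in the real case and $\mathbb Z_8$ in the complex case. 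Reading off the induced coordinate action gives exactly \eqref{Z2decoupled} and \eqref{Z8decoupled}; note that these scalings fix $\Phi_{42}(0)=\Phi_{24}(0)=1$ and merely rescale the three remaining coefficients, so they preserve the full system \eqref{snm}.

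With $(s,t)$ fixed, I would then use $(\a,\b,r)$ to kill the three higher coefficients $\Phi_{43}(0),\Phi_{34}(0)$ (weight $7$) and $\Phi_{53}(0)$ (weight $8$) via \eqref{g+}. The key structural point is that \eqref{g+} is weight-non-decreasing on the relevant terms and reduces to the identity at $\a=\b=r=0$, while a normalised $\mathcal M$ carries no nontrivial coefficients of weight $3,4,5$; consequently, to first order in $(\a,\b,r)$ the two weight-$6$ coefficients are untouched and the map $(\a,\b,r)\mapsto(\Phi_{43}(0),\Phi_{34}(0),\Phi_{53}(0))$ has a $3\times 3$ linear part whose entries are built from $\Phi_{42}(0)=\Phi_{24}(0)=1$ and already-normalised lower data. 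Steps two and three thus decouple, the Jacobian of the full parameter map is block-triangular, and it remains to verify that the $3\times 3$ block is invertible; by the weighted-triangular structure the three equations can then be solved uniquely for $(\a,\b,r)$ once $(s,t)$ is chosen. This is where strong nonflatness re-enters, since the relevant nonzero entries are proportional to $\Phi_{42}(0),\Phi_{24}(0)$.

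Putting the two blocks together shows that \eqref{snm} is attainable and that the normalising tuple $(s,t,\a,\b,r)$ is unique modulo the discrete scaling stabiliser found in the second step; since that stabiliser fixes all five normalised coefficients, it acts on the special normal forms precisely by \eqref{Z2decoupled} (real) and \eqref{Z8decoupled} (complex). The main obstacle I anticipate is the explicit computation in the third step: one must propagate the projective map \eqref{g+} through a normal-form $\Phi$, re-project onto the normal-form space using the solvability established in \autoref{unique}, and compute the resulting $3\times 3$ determinant, checking that strong nonflatness forces it to be nonzero. The scaling step and the discrete-group bookkeeping are, by comparison, routine.
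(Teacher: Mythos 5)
Your proposal follows essentially the same route as the paper: bring $\mathcal M$ to a normal form \eqref{normalform}, use the scaling \eqref{g0} to set $\Phi_{42}(0)=\Phi_{24}(0)=1$ (with the stabiliser $s^3t=st^3=1$ yielding exactly \eqref{Z2decoupled}, resp.\ \eqref{Z8decoupled}), and then use the remaining parameters $(\alpha,\beta,r)$ of \eqref{g+} to kill $\Phi_{43}(0),\Phi_{34}(0),\Phi_{53}(0)$ via a triangular affine action justified by the fact that a normalised $\mathcal M$ agrees with the model up to weight $5$. The one step you leave open, the invertibility of the $3\times3$ block, is exactly what the paper supplies with the explicit formulas $\Phi^*_{43}(0)=\Phi_{43}(0)+\alpha$, $\Phi^*_{34}(0)=\Phi_{34}(0)-2\beta$, $\Phi^*_{53}(0)=\Phi_{53}(0)+3r+\tau(\alpha,\beta)$, whose diagonal entries $1,-2,3$ make the determinant check you anticipate immediate once the weight-$6$ coefficients are normalised to $1$.
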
 

\begin{proof}
Let us bring $\mathcal M$ to some normal form \eqref{normalform}. Let us note that if an ODE $\mathcal E$, as in \eqref{ODE}, corresponds to a manifold of solutions  in the normal form \eqref{normalform}, then a straightforward computation gives  
\begin{equation}\label{I1I2}
I_1(F)(0,0,0)=\frac{\partial^4 F}{\partial y\rq{}^4}(0,0,0)=48\Phi_{42}(0),\quad I_2(F)(0,0,0)=\frac{\partial^4 F}{\partial x^2\partial y\rq{}^2}(0,0,0)=48\Phi_{24}(0).
\end{equation}
Thus, at a strongly nonflat point we have:
\begin{equation}\label{nonflat}
\Phi_{42}(0)\neq 0, \quad \Phi_{24}(0)\neq 0.
\end{equation}
We next consider all transformations, bringing $\mathcal M$ to a normal form.
In what follows in the proof we switch to the notations in \eqref{g0},\eqref{g+} for fixing parameters of a normalizing map (not to be confused from the ones in \eqref{parameters}!).
It is easy to see that by a scaling \eqref{g0}, $\Phi_{42}(0),\Phi_{24}(0)$ change as $$\Phi_{42}(0)\mapsto s^3t\Phi_{42},\quad \Phi_{24}(0)\mapsto st^3\Phi_{24}.$$
Thus, we can achieve by a scaling \eqref{g0} 
\begin{equation}\label{specified}
\Phi_{42}(0)=  \Phi_{24}(0)=1,
\end{equation} 
and the respective parameters $t,s$ are defined uniquely up to a choice of $\varepsilon$ with $\varepsilon^8=1$ (which gives rise to $\varepsilon=\pm 1$ in the real case). Thus, once \eqref{specified} is achieved, we may assume, up to the actions \eqref{Z2decoupled},\eqref{Z8decoupled} respectively, the parameters $s,t$ in \eqref{g0} to be equal to $1$. 

Next, we have to consider the action of the remaining parameters $\alpha,\beta,r$ in \eqref{parameters}    on the space of normal forms \eqref{normalform} of a fixed manifold of solutions. We consider the basic identity \eqref{identity} for two manifolds $\mathcal M,\mathcal M^*$ in the normal form \eqref{normalform} and a map $(f,g,\lambda,\mu)$ between them with the identity parameters $s,t$. Using the fact that both $\mathcal M, \mathcal M^*$ coincide with the model \eqref{model} up to weight $5$, we first conclude that the components $(f_{j-1},g_j,\lambda_{j-1},\mu_j)$ with $j\leq 6$ coincide with that for a map \eqref{g+} with the same parameters $\alpha,\beta,r$. It is straightforward to compute then that for the desired target Taylor coefficients $\Phi^*_{43}(0),\Phi^*_{34}(0),\Phi^*_{53}(0)$ we have:
\begin{equation}\label{changePhi}
\Phi^*_{43}(0)=\Phi_{43}(0) + \alpha, \quad \Phi^*_{34}(0)=\Phi_{34}(0)-2\beta, \quad \Phi^*_{53}(0)=\Phi_{53}(0)+3r+\tau(\alpha,\beta),
\end{equation}
where $\tau(\alpha,\beta)$ is a specific polynomial of $\alpha,\beta$ exact form of which is of no interest to us. Now formulas \eqref{changePhi} immediately imply that for a fixed source $\mathcal M$  there exists precisely one collection $(\alpha,\beta,r)$ giving the desired conditions \eqref{snm}, which proves the proposition.

\end{proof}

\smallskip

We shall now provide an important discussion of the formal normalization procedure described in this section. Most importantly, this procedure  allows not only for obtaining a {\em complete} set of invariants of an ODE (as we will see from the convergence theorem below), but also allows for a {\em partial} normal form (normal form to weight $m$), which can be obtained by a {\em polynomial transformation}, finding exact coefficients of which amounts to solving a (finite-dimensional) system of linear equations. 

Indeed, let us consider a germ at the origin of an ODE $\mathcal E$, as in  \eqref{ODE}. For simplicity, assume that the origin is not a strongly-nonflat point of it, and fix an integer $m\geq 8$. By substituting $y=\Phi(x,a,b)$ satisfying $y(0)=b,\,y'(0)=a$,   we successively find as many Taylor coefficients of $\Phi$ as desired. In this way, we find all the weighted homogeneous polynomials $\Phi_j(x,a,b)$ in the above notations, $1\leq j\leq m$. Following then the successive procedure described above,  we successively solve all the equations \eqref{mm} modulo the space $\mathcal N$ to find the $\Phi^*_j,\,1\leq j\leq m$ corresponding to the normal form (for each $j$, this amounts to solving a system of linear equations). Adapting the procedure to the proof of \autoref{uptoZ2}, we end up with a {\em polynomial} manifold of solutions (terms in the expansion of $\Phi$ go up to weight $m$) which is in the {partial normal form} \eqref{sn} up to weight $m$ (that is, the manifold of solutions coincides with a one in the special normal form \eqref{sn} up to weight $m$).  

Now, in case the latter procedure for two given ODEs gives {\em different} results (i.e. the polynomials standing in the partial normal form are different for the two manifolds of solutions), we conclude that the two given germs of ODEs are inequivalent. As discussed in the Introduction, the later procedure can be applied for the point classification of Painlev\'e equations (with possibly using the MAPLE package).

%We illustrate the partial normal form procedure by the following examp

\section{Canonical curves in a manifold of solutions} We now approach to the problem of \emph{convergence} of a transformation, bringing a manifold of solutions to a normal form, by introducing below a crucial for the geometry of manifolds of solutions geometric objects that we call \emph{chains}.  

Let us start with a smooth manifold of solutions $\mathcal M=\bigl\{y=\Phi(x,a,b)\bigr\}$, associated with the initial ODE \eqref{ODE}, and consider a point $p=(x_0,y_0,a_0,b_0)$ on it. (Using \autoref{globalparam} we consider $\mathcal M$ as globally associated with $\mathcal E$). We may then consider  the corresponding formal manifold $\widehat{\mathcal M}=\bigl\{y=\widehat{\Phi}(x,a,b)\bigr\}$ through $p$, and apply to $\widehat{\mathcal M}$ the formal normalization procedure from the previous section.   Recall that, in any coordinates \eqref{approximation}, a formal normalizing transformation $H$ is unique up to a choice of an element $H_0$ of the transformation group  generated by \eqref{g0},\eqref{g+}. Let us fix any direction $d$ in  the tangent space $T_p\mathcal M$ which is transverse to the distinguished plane $T^C_p\mathcal M$, and take a formal normalizing transformation $H$ such that the image $dH|_p(d)$ is  the vertical line
\begin{equation}\label{Gamma}
\Gamma=\bigl\{x=a=0,\quad y=b\bigr\}
\end{equation}
(note that $\Gamma\subset \mathcal M_N$, where $\mathcal M_N=H(\mathcal M)$ is the formal normal form). Note that such a transformation $H$ is not unique, as a choice of $d$ corresponds to a choice of parameters $\alpha,\beta$ in \eqref{g+}, while the other parameters may vary.

We now consider $(x,a,b)$ as local coordinates on $\mathcal M$ and introduce a bundle  $\mathcal M^{(1)}$ over $\mathcal M$ which is  the  bundle of $1$-jets of curves of the form 
$$\gamma=\bigl\{x=x(b),\, a=a(b)\bigr\}\subset\mathcal M.$$ 
$\mathcal M^{(1)}$ has the local coordinates $(x,a,b,u_1,u_2)$ (where the last two coordinates correspond to the derivatives $\frac{dx}{db},\frac{da}{db}$). Fixing a direction $d$ as above amounts to fixing a point  $p^{(1)}\in\mathcal M^{(1)}$ over $p$.  The choice of a direction given by $\eqref{Gamma}$ in $T_0\mathcal M_N$ corresponds in this way to the origin in the $1$-jet bundle $\mathcal M^{(1)}_N$. Let then $H^{(1)}$ denotes the lifting of $H$ to a map $(\mathcal M^{(1)},p^{(1)})\mapsto (\mathcal M^{(1)}_N,0)$. If now $e\in T_{p^{(1)}}\mathcal M^{(1)}_N$ is the vector given in local coordinates as $(0,0,1,0,0)$ (that is, $e$ is tangent to the lifting of \eqref{Gamma} to the  bundle $\mathcal M^{(1)}_N$), we define a direction field in $\mathcal M^{(1)}$ as follows:
\begin{equation}\label{direction}
l(p^{(1)}):=\Bigl(dH^{(1)}|_{p^{(1)}}\Bigr)^{-1}\bigl (e\bigr).
\end{equation}
It can be seen directly from the factorization \eqref{factor} and the formulas \eqref{g0},\eqref{g+} that the definition of $l(p^{(1)})$ does \emph{not} depend on the choice of a formal normalizing transformation $H$ with  $dH|_p(d)=\Gamma$ (that is, variation of parameters $s,t,r$ in \eqref{g0},\eqref{g+} does not change the direction $l(p^{(1)}$).

 \begin{proposition}\label{smooth}
 Formula \eqref{direction} defines a $C^\infty$ smooth direction field in $\mathcal M^{(1)}$.
 \end{proposition}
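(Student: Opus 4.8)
The plan is to show smoothness of the direction field $l(p^{(1)})$ by exhibiting it as the pullback of the constant field $e$ under a family of normalizing maps that depends smoothly on the base point $p \in \mathcal M$. The key subtlety is that the definition \eqref{direction} invokes a \emph{formal} normalizing transformation $H$ attached to each point $p$, and a priori the Taylor coefficients of $H$ at $p$ could depend badly (e.g. only measurably, or with loss of derivatives) on $p$. So the heart of the matter is to establish that the relevant finite jet of $H$ at each base point varies smoothly with the base point, and that $l(p^{(1)})$ depends on only finitely many of these coefficients.

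First I would observe, using \autoref{globalparam} and the diffeomorphism \eqref{ME}, that as $p$ ranges over $\mathcal M$ we may regard the germ of $\mathcal M$ at $p$ as a manifold of solutions for $\mathcal E$ centered at $\chi(p)$; the passage from $p$ to the admissible coordinates of \eqref{approximation} (the quadratic recentering $x\mapsto x-x_0$, $y\mapsto y+Q(x)$ and the quadratic normalization of the parameterization $\psi$) is itself given by formulas that are polynomial in the finite jet of $\Phi$ at $p$, hence smooth in $p$ since $\Phi$ is $C^\infty$. Thus I reduce to showing that, \emph{in these moving admissible coordinates}, the finitely many Taylor coefficients of the normalizing map $H$ that enter the computation of $dH^{(1)}|_{p^{(1)}}$ depend smoothly on $p$.

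Next I would make precise that $l(p^{(1)})$ is determined by a \emph{finite} jet of $H$. The vector $e=(0,0,1,0,0)$ and the inverse differential in \eqref{direction} involve only the $1$-jet of $H^{(1)}$, i.e. the $2$-jet of $H=(f,g,\lambda,\mu)$, at the point. The crucial structural input is \autoref{formal}: once the five parameters \eqref{parameters} are fixed and the target is required to be in normal form \eqref{normalform}, the normalizing transformation is \emph{uniquely} determined, and by \autoref{unique} its weighted-homogeneous components $(f_{m-1},g_m,\lambda_{m-1},\mu_m)$ are obtained by solving, order by order, the linear systems \eqref{mm} whose right-hand sides are polynomial in the previously determined components, in their derivatives, and in the Taylor coefficients $\Phi_j$ of the source manifold. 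Because each step is the unique solution of a linear system with invertible coefficient matrix (this invertibility is exactly the content of \autoref{unique}, giving \eqref{decompose}), Cramer's rule expresses these coefficients as rational functions, with nonvanishing denominators, of the $\Phi_j$. Since the $\Phi_j$ are themselves smooth functions of $p$ (being Taylor coefficients of the $C^\infty$ function $\Phi$ at the moving base point), every finite jet of $H$, evaluated in the moving admissible coordinates, is a smooth function of $p$. In particular the $2$-jet of $H$, hence $dH^{(1)}|_{p^{(1)}}$, and hence $l(p^{(1)})$, is smooth in $p^{(1)}$.

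The remaining point is well-definedness together with smoothness: by the remark following \eqref{direction}, varying the parameters $s,t,r$ in \eqref{g0}, \eqref{g+} does not change $l(p^{(1)})$, and fixing the transverse direction $d$ pins down $\alpha,\beta$; so I may select a single smooth branch of normalizing maps over $\mathcal M^{(1)}$ (for instance by choosing $s=t=1$, $r=0$ and reading $\alpha,\beta$ off the coordinates $(u_1,u_2)$ of $p^{(1)}$, which is a smooth prescription), compute $l$ along that branch, and invoke the parameter-independence to conclude the result is intrinsic. The main obstacle I anticipate is precisely the smoothness of the moving normalization — guaranteeing that the order-by-order solution of \eqref{mm} can be carried out with coefficients depending smoothly and without loss of regularity on $p$, uniformly as $p$ ranges over $\mathcal M$. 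This is resolved by the observation that for computing a fixed jet of $l$ only finitely many orders $m$ are needed, so only finitely many derivatives of $\Phi$ in $p$ are involved, and each linear system has invertible (indeed explicitly triangular, by the staged elimination in the proof of \autoref{unique}) coefficient matrix, so no loss of smoothness or blow-up of denominators occurs.
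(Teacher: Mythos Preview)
Your proposal is correct and follows essentially the same approach as the paper: reduce to a finite jet of $H$ (the paper specifies $m\le 5$), observe that the admissible coordinates \eqref{approximation} depend smoothly on $p$ via the $2$-jet of $\Phi$, and then note that each $(f_{m-1},g_m,\lambda_{m-1},\mu_m)$ is the unique solution of a linear system \eqref{mm} with fixed invertible coefficient matrix and right-hand side smooth in $p^{(1)}$, so $dH^{(1)}|_{p^{(1)}}$ and hence $l(p^{(1)})$ are smooth. Your added remarks on selecting a smooth branch by fixing $s=t=1$, $r=0$ and reading $(\alpha,\beta)$ from the fiber coordinates are a nice explicit touch, but the paper handles this more implicitly via the parameter-independence observation preceding the proposition.
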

 
 \begin{proof}
We first note that the coordinates bringing $(\mathcal M,p)$ to the form \eqref{approximation} depend smoothly on $p\in\mathcal M$ (as the coordinate change is quadratic with coefficients depending on the $2$-jet of $\Phi$ at $p$), and so do the coordinates in the $1$-jet bundle (in their dependence on $p^{(1)}$). Next, we claim that the desired
direction \eqref{direction}  can be also defined without using formal transformations.
Indeed, for a manifold \eqref{approximation} it follows from the normal form construction that, as soon
as the initial weighted polynomials
$\{\Phi^*_j,f_{j-1},g_j,\,3\leq j\leq m\}$ for some $m\geq 3$ have
been determined, they do not change after further normalization of
terms of higher weight. Hence, solving the equations \eqref{mm}
up to $m$ sufficiently large (namely, for all $m\leq 5$), we uniquely determine
%the collection
%$\{f_2,f_3,g_4,g_5\}$ and thus determine
$dH^{(1)}|_{p^{(1)}}$. 
It is also not difficult to see from here already that the constructed direction 
field is smooth. Indeed,  each fixed weighted polynomial $\Phi_m$,
 depends on $p$ smoothly (because the coordinates \eqref{approximation}  do). Hence all polynomials $f_m$ and $g_m$ depends on $p^{(1)}$
smoothly, as it is obtained by solving a system of linear
equations with a fixed nondegenerate matrix in the left-hand side and
right-hand side smooth in $p^{(1)}$ (the latter fact can be seen from
the proof of \autoref{formal}). We immediately conclude from here
$dH^{(1)}|_{p^{(1)}}$ depends on $p^{(1)}$ smoothly, and so does $l(p^{(1)})$.
 \end{proof}

We now integrate the smooth direction field  $l(p^{(1)})$ and obtain a canonical foliation in the $1$-jet bundle $\mathcal M^{(1)}$. Projecting the leaves of this foliation onto the base $\mathcal M$ gives us smooth curves in $\mathcal M$ that we call \emph{chains}. The canonical map \eqref{MtoE} from $\mathcal M$ onto the $1$-jet space $J^1(\RR{},\RR{})$ gives us a smooth $2$-parameter family of smooth curves, each of which we as well call \emph{a chain}. As follows from the constructions, there is exactly one chain in each transverse direction (to either the $2$-plane $T^C\mathcal M$ or the $2$-plane $\Pi$, as in \eqref{Pi}). 

It is not difficult to see that for a real-analytic manifold of solutions all the chains are real-analytic as well, as the direction field \eqref{direction} is real-analytic in this case. We summarize the section by the following proposition.
 
 \begin{proposition}\label{chains} Let $\mathcal E$ be a smooth (resp. real-analytic) second order ODE, as in \eqref{ODE}, defined near a point $(x_0,y_0,u_0)\in J^1(\RR{},\RR{})$, and $\mathcal M$ its manifold of solutions defined near the corresponding point $p=(x_0,y_0,a_0,b_0)\in\mathcal M$. Then
 
 \smallskip
 
 {\bf (a)} For any  direction $d\subset T_p\mathcal M$, transverse to the space $T^C_p \mathcal M$, there exists a unique chain $\tilde \gamma\subset\mathcal M$, passing through $p$ and tangent to $d$ at $p$. If $\mathcal M$ is real-analytic, then $\tilde \gamma$ is real-analytic as well.

\smallskip

{\bf (b)} The family of all chains in $\mathcal M$ is invariant under (real-analytic) product diffeomorphisms \eqref{coupled} of $\RR{4}$, while the family of all possible chains of the ODE $\mathcal E$ is invariant under (real-analytic) point diffeomorphisms of $J^1(\RR{},\RR{})$.
\end{proposition}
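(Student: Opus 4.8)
The plan is to deduce both parts from the single nontrivial analytic fact already in hand, namely that \eqref{direction} defines a smooth (resp.\ real-analytic) direction field $l$ on the jet bundle $\mathcal M^{(1)}$ (\autoref{smooth}); everything else is either standard ODE theory or equivariance bookkeeping. For part (a) I would first record the elementary dictionary between transverse directions and jets: writing $\mathcal M$ in the adapted coordinates $(x,a,b)$, the fiber $T^C_p\mathcal M$ is spanned by the $x$- and $a$-directions (cf.\ \eqref{Pi}), so a direction $d\subset T_p\mathcal M$ transverse to $T^C_p\mathcal M$ has nonzero $b$-component and, after normalizing that component to $1$, is recorded precisely by the jet coordinates $(u_1,u_2)=(dx/db,da/db)$. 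Thus transverse directions $d$ at $p$ are in bijection with the points $p^{(1)}$ of $\mathcal M^{(1)}$ lying over $p$.

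With this dictionary, part (a) is the existence-uniqueness theorem for the field $l$: since $l$ is smooth (resp.\ real-analytic) by \autoref{smooth}, through the point $p^{(1)}$ determined by $d$ there passes a unique integral curve of $l$ (Picard--Lindel\"of in the smooth case, the Cauchy analytic existence-uniqueness theorem in the analytic case), and its projection to $\mathcal M$ is by definition the chain $\tilde\gamma$. I would then check that $\tilde\gamma$ is tangent to $d$ at $p$: since the lift $H^{(1)}$ covers $H$, the bundle projection intertwines $dH^{(1)}$ with $dH$, so the projection of $l(p^{(1)})=(dH^{(1)})^{-1}(e)$ equals $(dH)^{-1}$ applied to the projection of $e$, which is the $b$-direction tangent to $\Gamma$; as $dH|_p(d)=\Gamma$ by the choice of $H$, this projection is exactly $d$. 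In particular the projected tangent is nonzero, so $\tilde\gamma$ is a genuine immersed curve, and since its tangent at $p$ recovers $d$, it is the unique chain through $p$ in the direction $d$. Real-analyticity of $\tilde\gamma$ is inherited from that of the direction field.

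For part (b) the key point is that the construction of $l$ is intrinsic --- it was shown before \autoref{smooth} to be independent of the residual parameters $s,t,r$ of the normalizing map in \eqref{g0},\eqref{g+} --- so it should transform equivariantly. Concretely, let $\Psi$ be a product diffeomorphism \eqref{coupled} carrying $(\mathcal M,p)$ to $(\tilde{\mathcal M},\tilde p)$, set $d'=d\Psi|_p(d)$ and $\tilde p^{(1)}=\Psi^{(1)}(p^{(1)})$. If $H$ normalizes $\mathcal M$ at $p$ with $dH|_p(d)=\Gamma$, then $\tilde H:=H\circ\Psi^{-1}$ maps $\tilde{\mathcal M}$ into the \emph{same} normal form (so the distinguished vector $e$ is literally the same) and satisfies $d\tilde H|_{\tilde p}(d')=\Gamma$; lifting gives $\tilde H^{(1)}=H^{(1)}\circ(\Psi^{(1)})^{-1}$. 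Substituting into \eqref{direction} yields
\begin{equation*}
\tilde l(\tilde p^{(1)})=\bigl(d\tilde H^{(1)}\bigr)^{-1}(e)=d\Psi^{(1)}\bigl((dH^{(1)})^{-1}(e)\bigr)=d\Psi^{(1)}\bigl(l(p^{(1)})\bigr),
\end{equation*}
so $\Psi^{(1)}$ carries the field $l$ to $\tilde l$, hence integral curves to integral curves and, projecting, chains of $\mathcal M$ to chains of $\tilde{\mathcal M}$. The statement for point diffeomorphisms of $J^1(\RR{},\RR{})$ then follows by transporting this through the canonical diffeomorphism \eqref{ME}: by \autoref{1to1} a point transformation of $\mathcal E$ corresponds to a product transformation \eqref{coupled}, and $\chi$ identifies chains in $\mathcal M$ with chains of $\mathcal E$ in $J^1$, so invariance of the former gives invariance of the latter.

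Since the one piece of genuine analysis (smoothness, resp.\ analyticity, of $l$) is already supplied by \autoref{smooth}, no step here is analytically hard; the points demanding care are the two pieces of bookkeeping. The first is verifying that the bundle projection really intertwines the jet lift with the base map, so that the projected integral curve carries the prescribed tangent $d$ and the directions-to-jets dictionary is an honest bijection. The second, and the one I expect to be the main obstacle, is pinning down the equivariance cleanly: one must check that $e$ and the target normal-form coordinates are the \emph{same} for $\mathcal M$ and $\tilde{\mathcal M}$ (so that $\tilde H=H\circ\Psi^{-1}$ is a legitimate normalizing map in the sense used to define $l$), and that the established independence of $l$ from the parameters $s,t,r$ in \eqref{g0},\eqref{g+} is precisely what makes this equivariance well-posed rather than merely formal, even when $\Psi$ itself is not normalized.
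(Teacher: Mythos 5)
Your proposal is correct and follows essentially the same route as the paper: the paper proves \autoref{chains} by the construction preceding it, namely integrating the smooth (resp.\ real-analytic) direction field of \autoref{smooth} on $\mathcal M^{(1)}$, projecting the integral curves to $\mathcal M$, and noting that invariance and the one-chain-per-transverse-direction statement follow from the intrinsic, parameter-independent definition of the field \eqref{direction}. Your write-up merely makes explicit the bookkeeping (the bijection between transverse directions and fibre points of $\mathcal M^{(1)}$, the intertwining of $dH^{(1)}$ with $dH$ under the bundle projection, and the equivariance via $\tilde H=H\circ\Psi^{-1}$) that the paper leaves as ``follows from the constructions''.
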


\section{Existence of a convergent normalizing transformation} In this section we show the existence of a $C^\infty$ smooth transformation bringing a smooth manifold of solutions  \eqref{approximation} to a normal form \eqref{normalform}, and apply this to the proof of \autoref{theor1}.  We do so in several steps, each of which has a certain geometric interpretation that we address below. In what follows for a smooth function $\Phi(x,a,b)$ we denote by $\Phi_{kl}(b)$ the partial derivatives 
$$\left.\frac{1}{k!l!}\frac{\partial^{k+l}\Phi}{\partial x^k\partial a^l}\right|_{x=a=0}$$
 (all of which are again $C^\infty$ smooth  functions).

\smallskip

\noindent{\bf 1. Normalization  of a chain.} For a manifold of solutions $\mathcal M$, as in \eqref{approximation}, we choose any direction $d\subset T_0\mathcal M$, transverse to the space $T^C_0 \mathcal M$, and consider the corresponding chain $\tilde \gamma\subset\mathcal M$, which is given by 
$$\tilde \gamma=\bigl\{x=f_0(b), \,\, a=\lambda_0(b),\,\, y=g_0(b)\bigr\},\quad f_0(0)=\lambda_0(0)=g_0(0)=0,\quad g_0\rq{}(0)\neq 0.$$ 
We then perform the transformation
\begin{equation}\label{kill00}
x=x^*+f_0(y^*),\quad y=g_0(y^*), \quad a=a^*+\lambda_0(b^*), \quad b=b^*.
\end{equation}
It is easy to see that the chain $\tilde\gamma$ is transformed into the vertical line $\Gamma$, as in  \eqref{Gamma}, by means of \eqref{kill00}, as desired. The form \eqref{approximation} is clearly preserved. The fact that the new manifold of solutions contains the line \eqref{Gamma} yields
$$\Phi^*_{00}(b)=0$$
for its defining function. In the sequel we consider only transformations, preserving the condition
\begin{equation}\label{Gammain}
\Gamma\subset\mathcal M.
\end{equation}

\smallskip

\noindent{\bf 2. Normalization of leaves of the canonical foliations $\mathcal S$ and $\mathcal S^*$ along the chain.} First,  we aim to straighten that leaves of the second canonical foliation $\mathcal S^*$ on $M$ which intersect the chain $\Gamma\subset\mathcal M$. For doing so, for a manifold of solutions obtained in the previous step we apply the transformation
\begin{equation}\label{kill0l}
x^*=x,\quad y^*=y,\quad a^*=a, \quad b^*=\Phi(0,a,b).
\end{equation} 
In view of \eqref{approximation}, this map is invertible. The conditions \eqref{approximation} and \eqref{Gammain} are preserved. The fact that the leaved are straightened along the chain means that $\Phi^*(0,a^*,b^*)=b^*$, while the latter condition is satisfied by applying the basic identity \eqref{identity} to the map \eqref{kill0l}. Note that for the new manifold of solutions we have
$$\Phi^*_{0l}(b)=0.$$

Second, we aim to straighten the leaves of the first canonical foliation $\mathcal S$ on $M$ intersecting the chain $\Gamma\subset\mathcal M$. For that, for a manifold of solutions obtained in the previous step we apply the transformation given by
\begin{equation}\label{killk0}
x=x^*,\quad y=g(x^*,y^*),\quad a=a^*, \quad b=b^*\quad\mbox{for}\quad g(x^*,y^*):=\Phi(x^*,0,y^*).
\end{equation} 
In view of \eqref{approximation}, this map is invertible. It is not difficult to see that all the previously achieved normalization conditions  are preserved. The fact that the leaves of $\mathcal S$ are straightened along the chain means that $\Phi^*(x^*,0,b^*)=b^*$. Substituting  the latter condition into the basic identity \eqref{identity} applied to the map \eqref{kill0l}, we see that it amounts to 
$$g(x^*,b^*)=\Phi(x^*,0,b^*),$$
as desired. Note that for the new manifold of solutions we have
$$\Phi^*_{k0}(b)=0.$$

\smallskip

\noindent{\bf 3. Fixing parameterizations of leaves of the canonical  foliations $\mathcal S$ and $\mathcal S^*$.}  Leaves of the foliation $\mathcal S$ are parameterized by the parameters $a_0,b_0$ as $s_{a_0,b_0}=\{y=\Phi(x,a_0,b_0)\}$, and that of the foliation $\mathcal S^*$ are parameterized as $s^*_{x_0,y_0}=\{y_0=\Phi(x_0,a,b)\}$. It is convenient to  solve this for $b$ and get
$s^*_{x_0,y_0}=\{b=\Phi^*(a,x_0,y_0)\}$. In this step we aim to fix these two parameterizations by the condition $(a_0,b_0)=\left(y\rq{}(0),y(0)\right)$ for the foliation $\mathcal S$ and by the condition $(x_0,y_0)=\left(b\rq{}(0),b(0)\right)$ for the foliation $\mathcal S^*$. Geometrically this means that each of the leaves of the foliation $\mathcal S$ is parameterized by its $1$-jet at the intersection point with the distinguished (thanks to the previous step) plane $\{x=0\}$, while each of the leaves of the foliation $\mathcal S^*$ is parameterized   by its $1$-jet at the intersection point with the distinguished plane $\{a=0\}$.

For the defining function $\Phi$ obtained in the previous step this amounts to the conditions
\begin{equation}\label{derivatives}
\Phi_x(0,a,b)=a,\quad\mbox{and}\quad \Phi_a(x,0,b)=x,
\end{equation}
respectively, which alternatively means 
$$\Phi_{1l}(b)=\Phi_{k1}(b)=0,\quad k,l\geq 1.$$

We first aim to achieve the condition $\Phi_{11}(b)=0$, which reads as $\Phi_{xa}(0,0,b)=1$. This can be interpreted as fixing the parameterization of the leaves of the canonical foliations first along the chain $\Gamma$. We do so by performing the transformation
\begin{equation}\label{kill11}
x^*=x,\quad y^*=y, \quad a^*=a\lambda_1(b)\quad b^*=b,
\end{equation}
where the function $\lambda_1(b)=1+O(b)$ will be determined later. 
Applying the basic identity \eqref{identity} to this map and using $\Phi^*_{x^*a^*}|_{x^*=a^*=0}=1$ we conclude, by differentiating in $x,a$ and substituting $x=a=0$, that $\Phi_{11}(b)=\lambda_1(b)$, which determines $\lambda_1$ with the desired property uniquely.

In order to achieve the first condition in \eqref{derivatives} we perform the  transformation 
\begin{equation}\label{kill1l}
x^*=x,\quad y^*=y,\quad a^*=\lambda(a,b), \quad b^*=b
\end{equation}
for an appropriate function $\lambda(a,b)$ with $\lambda=a+O(a^2)$ which will be determined later. Applying the basic identity \eqref{identity} to the map \eqref{kill1l}, we get
$$\Phi(x,a,b)=\Phi^*(x,\lambda(a,b),b).$$
Differentiating  in $x$ and substituting $x=0$, we get, in view of $\Phi^*_{x^*}(0,a^*,b^*)=a^*$:
$$\Phi_x(0,a,b)=\lambda(a,b),$$
which determines $\lambda$ with the desired property uniquely.

For the second condition in \eqref{derivatives}, we perform to the previously obtained manifold of solution the  transformation 
\begin{equation}\label{killk1}
x^*=f(x,y),\quad y^*=y,\quad a^*=a, \quad b^*=b
\end{equation}
for an appropriate function $f(x,y)$ with $f=x+O(x^2)$ which will be determined later. Note that such a transformation does not change the properties of $\mathcal M$ achieved in the previous steps. Applying then the basic identity \eqref{identity} to the map \eqref{killk1}, we get
$$\Phi(x,a,b)=\Phi^*(f(x,y),a,b)|_{y=\Phi(x,a,b,)}.$$
Differentiating  in $a$ and substituting $a=0$, we get, in view of the properties $\Phi^*_{x^*}(x^*,0,b)=0$,  $\Phi(x,0,b)=b$, and $\Phi^*_{a^*}(x^*,0,b^*)=x^*$:
$$\Phi_a(x,0,b)=f(x,b),$$
which determines $f(x,y)$ with the desired property uniquely.

We end up with a manifold of solutions of the form

\begin{equation}\label{prenormal}
y=b+ax+\sum_{k,l\geq 2}\Phi_{kl}(b)x^ka^l.
\end{equation}

\smallskip

\noindent{\bf 4. Fixing a normal basis for the canonical bilinear form along the chain.} 
As the next step in the normalization procedure we perform a transformation
\begin{equation}\label{kill22}
x^*=f(y)x,\quad y^*=y, \quad a^*=\frac{1}{f(b)}a,\quad b^*=b
\end{equation}
for an appropriate function $f(b)=1+O(b)$ that will be determined later in order to achieve the condition $\Phi^*_{xxaa}(0,0,b)=0$, or alternatively $\Phi^*_{22}(b)=0$. To explain the geometric meaning of this step we note that, as follows from \eqref{prenormal}, the canonical bilinear form \eqref{bilinear} is given by \eqref{L} along the chain $\Gamma$. Thus the transformation \eqref{kill22}, which does not change the bilinear form \eqref{bilinear} along $\Gamma$, can be interpreted as a choice of a \emph{normal basis} for the form \eqref{bilinear} at any point $p\in\Gamma$ (that is, a basis where this form is given by \eqref{L}).

The basic identity for the map \eqref{kill22} looks as 
$$\Phi(x,a,b)=\Phi^*\bigl(xf(\Phi(x,a,b)),a,b\bigr).$$
Differentiating both sides twice in $x$ and twice in $a$ and evaluating at $x=a=0$, it is not difficult to compute that the requirement $\Phi^*_{xxaa}(0,0,b)=0$ amounts to
\begin{equation}\label{find22}
\Phi_{xxaa}(0,0,b)= \frac{f\rq{}(b)}{f(b)}.
\end{equation}
Considering  \eqref{find22} as a first order ODE with the  initial data $f(0)=1$, we find $f$ with the desired properties uniquely. Thus for the new manifold of solutions we have
$$\Phi^*_{22}(b)=0.$$
However, one can say even more.

\begin{proposition}\label{32dies}
For the new manifold of solutions  we have, in addition, 
$$\Phi_{32}(b)=\Phi_{23}(b)=0.$$
\end{proposition}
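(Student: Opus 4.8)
The plan is to exploit the fact that, after Step 4, the manifold of solutions is in the form \eqref{prenormal} with $\Phi_{22}(b)\equiv 0$, and to extract two further identities as \emph{consequences} of the normalizations already imposed, rather than by performing new transformations. The key observation is that up to this point we have only \emph{fixed} geometric data (the chain $\Gamma$, the leaves of $\mathcal S$ and $\mathcal S^*$ along $\Gamma$, their parameterizations, and a normal basis for $\phi$ along $\Gamma$), and these normalizations are tightly linked to the structure of the defining function. The vanishing of $\Phi_{32}$ and $\Phi_{23}$ should follow automatically from the relations among the Taylor coefficients $\Phi_{kl}(b)$ that the earlier steps force, together with the compatibility condition coming from the fact that $\mathcal M$ is an \emph{honest} manifold of solutions of a second order ODE.

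First I would write out the defining relation that makes $\mathcal M=\{y=\Phi(x,a,b)\}$ a manifold of solutions: the function $\Phi(x,a,b)$, viewed as $y(x)$ for fixed $(a,b)$, must satisfy a second order ODE $y''=F(x,y,y')$ whose right-hand side is \emph{independent} of $(a,b)$. Concretely, eliminating $(a,b)$ from the three relations $y=\Phi,\ y'=\Phi_x,\ y''=\Phi_{xx}$ must yield a relation $\Phi_{xx}=F(x,\Phi,\Phi_x)$ with $F$ free of $a,b$. This is the \emph{integrability} (or compatibility) condition on $\Phi$, and it imposes nontrivial constraints on the coefficients $\Phi_{kl}(b)$. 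The heart of the matter is that the coefficients $\Phi_{22}$, $\Phi_{32}$, $\Phi_{23}$ are not independent: differentiating the compatibility relation and extracting the appropriate weight/degree components should express $\Phi_{32}$ and $\Phi_{23}$ in terms of $\Phi_{22}$ and its $b$-derivatives (and possibly lower-order data that already vanish by \eqref{prenormal}). Since $\Phi_{22}(b)\equiv 0$ was just achieved in Step 4, these expressions would then force $\Phi_{32}=\Phi_{23}=0$ as well.

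The concrete computation I would carry out is to substitute the expansion \eqref{prenormal} into the condition that $\partial F/\partial a = \partial F/\partial b = 0$ where $F=\Phi_{xx}$ is re-expressed through $x,\Phi,\Phi_x$; equivalently, I would apply the total derivative operator $D$ from \eqref{DD} and use the canonical structure. A cleaner route, given the CR/Segre-type symmetry between the two foliations, is to exploit the duality $\mathcal S\leftrightarrow\mathcal S^*$ which exchanges the roles of $(x,y)$ and $(a,b)$ and correspondingly swaps the indices $k\leftrightarrow l$ in $\Phi_{kl}$. This symmetry should reduce the two claimed identities to a single one: establishing $\Phi_{23}=0$ (say) and then invoking the $x\leftrightarrow a$ duality of the normalized manifold to deduce $\Phi_{32}=0$. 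I would track the weight-$7$ and weight-$8$ components of the compatibility relation, isolating exactly the monomials $x^3a^2$ and $x^2a^3$ (at $x=a=0$ in the remaining $b$-variable).

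The main obstacle I anticipate is the bookkeeping in the compatibility condition: re-expressing $F=\Phi_{xx}$ purely in terms of $(x,\Phi,\Phi_x)$ requires inverting the change of variables $(a,b)\mapsto(\Phi,\Phi_x)$ to the needed order, and the resulting expressions for $\Phi_{32},\Phi_{23}$ will involve combinations of $\Phi_{22}$ and its derivatives entangled with the already-normalized vanishing coefficients. The delicate point is to verify that \emph{no other} surviving coefficient (beyond $\Phi_{22}$, which vanishes) contributes to the weight-$7$ and weight-$8$ parts picking out $x^3a^2$ and $x^2a^3$; if a stray term from, say, $\Phi_{24}$ or $\Phi_{42}$ entered, the conclusion would fail. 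I expect the weighted homogeneity \eqref{weights} to save us here, since those higher coefficients carry strictly larger weight and therefore cannot appear in the weight-$7$/weight-$8$ balance that determines $\Phi_{32}$ and $\Phi_{23}$. Making this weight count rigorous — and confirming that the only admissible source of these coefficients is $\Phi_{22}$ — is the crux of the argument.
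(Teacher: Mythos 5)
Your central mechanism does not exist. You propose to derive $\Phi_{32}=\Phi_{23}=0$ from the ``integrability/compatibility condition'' that $\mathcal M=\{y=\Phi(x,a,b)\}$ be an honest manifold of solutions of a second order ODE. But that condition is vacuous: \emph{any} nondegenerate two-parameter family $y=\Phi(x,a,b)$ is the solution family of some second order ODE, obtained by eliminating $(a,b)$ from $y=\Phi,\ y'=\Phi_x$ and substituting into $y''=\Phi_{xx}$ (this is exactly \eqref{findab}--\eqref{MtoE} in the paper; the right-hand side $F$ is independent of $(a,b)$ by construction). So no constraints whatsoever are imposed on the coefficients $\Phi_{kl}(b)$, and in particular $\Phi_{32},\Phi_{23}$ cannot be expressed through $\Phi_{22}$ this way. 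Indeed, if the implication ``prenormal form and $\Phi_{22}\equiv 0$ $\Rightarrow$ $\Phi_{32}=\Phi_{23}=0$'' held for arbitrary manifolds of solutions, the normalization conditions $\Phi_{32}=\Phi_{23}=0$ in \eqref{normalform} would be redundant; but the proof of \autoref{unique} shows they are genuinely independent conditions, used via \eqref{f0} and \eqref{lambda0} to pin down the otherwise free map coefficients $f_0''$ and $\tfrac12 g_1''-f_2'$. (A side symptom: you speak of weight-$7$ and weight-$8$ components, whereas $\Phi_{32}(0)x^3a^2$ and $\Phi_{23}(0)x^2a^3$ sit in weight $5$.)

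What actually makes the proposition true --- and what your argument never uses --- is that the curve $\Gamma$ straightened in Step 1 is a \emph{chain}, not an arbitrary transverse curve. The paper's proof takes the residual formal normalizing map $H$ fixing the direction of $\Gamma$ with $s=t=1$; the chain property forces $f_y(0,0)=f_{yy}(0,0)=\lambda_{bb}(0,0)=0$. Since the manifold already agrees with the model up to weight $4$ (here is where $\Phi_{22}=0$ enters), the weight-$5$ homological equation reduces to $\mathcal L(f_4,g_5,\lambda_4,\mu_5)=-\Phi_{32}(0)x^3a^2-\Phi_{23}(0)x^2a^3$, and comparing the monomials $x^2ab,\ x^2a^3,\ xb^2,\ x^3a^2$ together with the chain conditions yields $\Phi_{32}(0)=\Phi_{23}(0)=0$. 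The extension from $b=0$ to all $b$ then comes from the invariance of the prenormal form under translations $b\mapsto b+b_0$, so the same argument runs at every point of $\Gamma$. Had an arbitrary transverse curve been straightened in Step 1, the conclusion would simply be false, which is a concrete way to see that no argument ignoring the chain condition can succeed.
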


\begin{proof}
We first claim that
$$\Phi_{32}(0)=\Phi_{23}(0)=0.$$
 Indeed, consider  a formal map $H=(f,g,\lambda,\mu)$ with $s=t=1$ in \eqref{mapdata}, which maps the formal manifold of solutions $\widehat{\mathcal M}$ corresponding to $(\mathcal M,0)$ into a formal normal form, and the vertical direction \eqref{Gamma} into itself. Since $\Gamma\subset\mathcal M$ is a chain, this map satisfies, by the definition of a chain,
\begin{equation}\label{usechain}
f_y(0,0)=f_{yy}(0,0)=\lambda_{bb}(0,0)=0.
\end{equation}
Let us then consider the basic identity \eqref{identity}, applied to the manifold $\widehat{\mathcal M}$, its normal form $\widehat{\mathcal M}_N$ and the map $H$. Note that the source and the target coincide with the model \eqref{model} up to weight $4$ (in view of \eqref{prenormal} and  $\Phi_{22}(b)=0$). Hence the weighted components $f_{j-1},g_j,\,j\leq 4$ coincide with that for \eqref{g+}; that is, they all vanish (in view of \eqref{usechain}). For the weight $m=5$ we then have
\begin{equation}\label{weight5}
 \mathcal L(f_4,g_5,\lambda_4,\mu_5)=-\Phi_{32}(0)x^3a^2-\Phi_{23}(0)x^2a^3
\end{equation}
(in the notations of \eqref{mm}).  Switching then to  the notations of the proof of \autoref{unique} and collecting in \eqref{weight5} terms $x^2a^1b^1,\,x^2a^3b^0,\,xb^2$, and $x^3a^2b^0$, it is not difficult to check that we obtain respectively the equations 
\begin{equation}\label{find32}
\begin{aligned}
f_2\rq{}(0)-g_1\rq{}\rq{}(0)&=0,\\
f_0\rq{}\rq{}(0)&=-2\Phi_{23}(0),\\
g_1\rq{}\rq{}(0)-\lambda_0\rq{}\rq{}(0)&=0,\\
\frac{1}{2}g_1\rq{}\rq{}(0)-f_2\rq{}(0)&=\Phi_{32}(0).
\end{aligned}
\end{equation}
 Since from \eqref{usechain} we have $\lambda_0\rq{}\rq{}(0)=f_0\rq{}\rq{}(0)=0$, the system \eqref{find32} gives $\Phi_{23}(0)=\Phi_{32}(0)=0$, which proves the claim.

Note then that because $\Gamma$ is a chain, and the prenormal form \eqref{prenormal} is invariant under shifts $b\mapsto b+b_0$, the same argument applies to any point $p\in\Gamma$. This finally proves 
$$\Phi_{23}(b)=\Phi_{23}(b)=0$$
in \eqref{prenormal}, as desired.
\end{proof}

\smallskip

\noindent{\bf 5. Fixing a parameterization for the chain.} It remains to achieve the last normalization condition $\Phi_{33}(b)=0$, or $\Phi_{x^3a^3}(0,0,b)=0$. We perform a transformation
\begin{equation}\label{kill33}
x^*=f(y)x,\quad y^*=g(y), \quad a^*=\lambda(b)a,\quad b^*=g(b)
\end{equation}
for  appropriate function $f,g,\lambda$ with $f(0),g\rq{}(0),\lambda(0)\neq 0$.  This transformation can be interpreted as a choice of a parameterization on the chain $\Gamma$. The basic identity \eqref{identity} looks as 
\begin{equation}\label{identity33}
g(\Phi(x,a,b))=\Phi^*\bigl(xf(\Phi(x,a,b)),a\lambda(b),g(b)\bigr).
\end{equation}
By differentiating \eqref{identity33} in $x,a$ and evaluating at $x=a=0$, it is not difficult to verify that for any $f,g,\lambda$ the function $\Phi^*$ satisfies
$$\Phi^*_{k1}=\Phi^*_{1l}=\Phi^*_{32}=\Phi^*_{23}=0,\quad k,l\geq 2.$$
Thus, in order to find $f,g,\lambda$ for which $\Phi^*$ has the desired form we need
\begin{equation}\label{conditions}
\Phi^*_{11}=\Phi^*_{22}=\Phi^*_{33}=0.
\end{equation}
Applying to \eqref{identity33} $\frac{\partial^2}{\partial x\partial a},\,\frac{\partial^4}{\partial x^2\partial a^2},\,\frac{\partial^6}{\partial x^3\partial a^3}$ and evaluating at $x=a=0$, we obtain respectively
\begin{equation}\label{functions}
\begin{aligned}
g\rq{}(b)&=f(b)\lambda(b),\\
2g\rq{}\rq{}(b)&=4f\rq{}(b)\lambda(b),\\
9g\rq{}(b)+\frac{3}{2}g\rq{}\rq{}\rq{}(b)&=\frac{9}{2}f\rq{}\rq{}(b)\lambda(b).
\end{aligned}
\end{equation}
By substituting $\lambda=\frac{g\rq{}}{f}$, the last two equations in \eqref{functions} give a system of ODEs, that we solve uniquely with some initial conditions  
$$f(0)=s,\,g\rq{}(0)=t,\,g\rq{}\rq{}(0)=r,\,s,t,\neq 0,$$ 
and then find $\lambda$ uniquely. 

All the normalization conditions \eqref{normalform} are satisfied now. We are now in the position to prove the main result of this section.

\begin{theorem}\label{convergent}
For any smooth manifold of solutions $\mathcal M$ defined near a point $p$ there exists a smooth product transformation \eqref{coupled} 
mapping $\mathcal M$ onto a smooth manifold of solutions in the normal form \eqref{normalform}. A normalizing transformation is uniquely determined by the restriction of its differential onto the leaf $T^C_p \mathcal M$ of the canonical subbundle $T^C\mathcal M\subset T\mathcal M$, and by its transverse second order derivative. In turn, in any coordinates \eqref{approximation}, this amounts to a choice of the five parameters \eqref{mapdata}.

If $\mathcal M$ is real-analytic, then the normalizing transformation and the normal form are real-analytic as well.
\end{theorem}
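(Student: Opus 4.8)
The plan is to prove \autoref{convergent} by converting the five-step normalization scheme (Steps 1--5 above) into an honest statement about smooth, not merely formal, transformations, and then to track the dependence on the chosen parameters \eqref{mapdata}. The key structural observation is that every one of the Steps 1--5 has already been phrased as a concrete, explicitly solvable smooth problem: each normalization is achieved by a transformation of the special triangular form \eqref{coupled} whose components are determined either by a direct substitution (Steps 1,2,3, where the unknown functions are read off algebraically from $\Phi$ via the basic identity \eqref{identity}) or by integrating a first-order ODE with smooth coefficients and prescribed initial data (Steps 4 and 5, cf.\ \eqref{find22} and \eqref{functions}). Since $\Phi$ is $C^\infty$ (resp.\ real-analytic), each such substitution yields $C^\infty$ (resp.\ real-analytic) component functions, and each ODE with smooth (resp.\ analytic) right-hand side has a smooth (resp.\ analytic) solution by standard ODE theory. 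Thus the plan is first to observe that the \emph{composition} of the five smooth transformations is itself a smooth product transformation \eqref{coupled} carrying $\mathcal M$ to a manifold of solutions satisfying all of \eqref{normalform}, where the only genuinely nontrivial step toward \eqref{normalform} is supplied by \autoref{32dies}, which guarantees $\Phi_{32}=\Phi_{23}=0$ as a consequence of the chain condition rather than as a separate normalization.

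The subtle point, and the one I expect to be the main obstacle, is the \textbf{regularity of the chain itself} used in Step 1. Steps 2--5 are unproblematic once a smooth chain $\tilde\gamma$ is fixed, but Step 1 presupposes that through the basepoint $p$ (and in a prescribed transverse direction $d$) there passes a \emph{smooth} chain $\tilde\gamma=\{x=f_0(b),\,a=\lambda_0(b),\,y=g_0(b)\}$. This is exactly what Section 4 was built to deliver: \autoref{smooth} shows that the direction field \eqref{direction} is $C^\infty$ (resp.\ real-analytic), and \autoref{chains}(a) integrates it to produce, through each point and in each transverse direction, a unique smooth (resp.\ real-analytic) chain. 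So the plan is to \emph{invoke} \autoref{chains} to legitimize Step 1, thereby closing the only gap between the formal scheme and a genuinely convergent one. The essential insight is that convergence is not proved by estimating a majorant series for the full normalizing map at once (the classical Moser difficulty); instead, the chain geometry has already been shown to be smooth \emph{geometrically}, and normalizing along a smooth chain keeps every subsequent step finite and smooth.

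It then remains to establish the \textbf{uniqueness and parameter count}. Here I would argue that the composite transformation constructed above is normalized by \eqref{normalmap} only after the preliminary factorization \eqref{factor}, so that the freedom in a normalizing map is exactly the freedom in the factor $H_0$ from the group generated by \eqref{g0},\eqref{g+}, i.e.\ the five parameters $(s,t,\alpha,\beta,r)$ of \eqref{mapdata}. By \autoref{formal}, the \emph{formal} normalizing transformation with prescribed data \eqref{mapdata} is unique; since the smooth transformation we have built is in particular a formal normalizing transformation realizing \eqref{normalform}, its Taylor expansion must coincide with that unique formal solution. Consequently the smooth normalizing transformation with prescribed \eqref{mapdata} is unique as well, and the geometric description---determination by the restriction of the differential to $T^C_p\mathcal M$ together with the transverse second-order derivative---follows from translating \eqref{mapdata} into the invariant language exactly as in \autoref{theor1} and \autoref{optimal}. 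Finally, the real-analytic claim is inherited step by step: if $\mathcal M$ is real-analytic then the chain is real-analytic by \autoref{chains}, each substitution preserves analyticity, and each ODE in Steps 4--5 has analytic coefficients and hence an analytic solution, so the composite normalizing map and the resulting normal form are real-analytic.
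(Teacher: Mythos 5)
Your existence argument follows the paper's route exactly: the paper's own proof of \autoref{convergent} simply declares that existence "is already proved above" by Steps 1--5, with \autoref{smooth} and \autoref{chains} supplying the smooth (resp.\ real-analytic) chain needed in Step 1 and \autoref{32dies} handling $\Phi_{32}=\Phi_{23}=0$, just as you describe. The regularity bookkeeping (algebraic substitutions in Steps 1--3, ODEs with smooth or analytic data in Steps 4--5) is also the intended mechanism for avoiding any majorant estimates.

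However, your uniqueness argument has a genuine gap. You deduce from \autoref{formal} that the \emph{Taylor expansion} of a smooth normalizing transformation with prescribed data \eqref{mapdata} is uniquely determined, and then conclude "consequently the smooth normalizing transformation \ldots is unique as well." In the $C^\infty$ category this inference fails: two distinct smooth maps can share the same Taylor series at a point (they may differ by a flat perturbation), so formal uniqueness only settles the real-analytic case --- which is exactly the caveat the paper itself makes ("in the real-analytic case the uniqueness statement immediately follows from the formal \autoref{formal}"). To close the gap you must show that a smooth self-map $H=(f,g,\lambda,\mu)$ between two manifolds in normal form whose Taylor expansion at $0$ is the identity is \emph{actually} the identity. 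The paper does this by a separate geometric argument: such an $H$ preserves the direction of $\Gamma$, hence preserves the chain $\Gamma$ itself (by uniqueness of the chain in a given direction and invariance of chains under smooth transformations); then setting $a=0$ and $x=0$ in the basic identity \eqref{identity} forces $g=g(y)$, $\mu=g(b)$, $f=xf(y)$, $\lambda=a\lambda(b)$; and finally these functions satisfy the genuine (not formal) ODE system \eqref{functions} with initial data $f(0)=g'(0)=1$, $g''(0)=0$, whence $f\equiv 1$, $\lambda\equiv 1$, $g(b)=b$ by the uniqueness theorem for ODEs. Without some such argument converting formal rigidity into actual rigidity, your proof establishes uniqueness only of the infinite jet of the normalizing map, not of the map itself.
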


\begin{proof}
The existence statement is already proved above, so that it remains to prove the uniqueness in the general smooth case (in the real-analytic case the uniqueness statements  immediately follows from the formal \autoref{formal}).  It is sufficient to prove that if two smooth manifolds of solutions are in the normal form and $H=(f,g,\lambda,\mu)$ is a map between them with the identity Taylor expansion at $0$, then $H$ is the identity. 

 Note that since the formal expansion of $H$ is identical, it preserves the direction given by \eqref{Gamma},  hence it preserves the curve \eqref{Gamma} itself (since there is a unique chain in a given direction, and the family of chains in $\mathcal M$ is invariant under smooth transformations). Thus we have $f=x(1+O(1)),\,\lambda=a(1+O(1))$. Consider then the basic identity \eqref{identity}. 
 Putting in \eqref{identity} $a=0$ and then $x=0$, we get, respectively,
 $$g(x,b)=\mu(0,b),\quad g(0,b)=\mu(a,b).$$
 Thus we have $g=g(y),\,\mu=g(b)$. Differentiating then \eqref{identity} once in $a$ and plugging $a=0$, and then doing so for $x$, we similarly obtain $f=xf(y),\,\lambda=a\lambda(b)$. Proceeding after that similarly to Step 5 above, we obtain the system \eqref{functions}, which we need to solve with $f(0)=1=g\rq{}(0)=1,\,g\rq{}\rq{}(0)=0,$ so that 
 we finally get (by the uniqueness for solutions) $f(b)=\lambda(b)=1,\,g(b)=b$, as required. 
\end{proof}

By inspecting the proof of \autoref{convergent} and Step 5 above, one can see that

\bigskip

\noindent\emph{we can canonically, up to the action of the projective group}

\begin{equation}\label{parameter}
x\mapsto \frac{sx}{1+ry},\quad y\mapsto \frac{sty}{1+ry},\quad a\mapsto \frac{ta}{1+rb},\quad b\mapsto \frac{stb}{1+rb}
\end{equation}

\bigskip

\noindent\emph{choose a parameterization on each chain $\tilde\gamma$.}

\bigskip

Note that the group \eqref{parameter} is obtained from the one generated by \eqref{g0},\eqref{g+} by putting $\alpha=\beta=0$ (which corresponds to fixing a chain through the reference point).

We are now in the position to prove \autoref{theor1}.

\begin{proof}[Proof of \autoref{theor1}]
We note that if $\mathcal M$, as in \eqref{manifold}, is a manifold of solutions for an ODE $\mathcal E$, as in \eqref{ODE}, then $\mathcal E$ may be obtained from $\mathcal M$ by solving the system
\begin{equation}\label{findab}
y=\Phi(x,a,b),\quad y\rq{}=\Phi_x(x,a,b)
\end{equation}
for $a,b$, and substituting the resulting functions $a=A(x,y,y\rq{}),\,b=B(x,y,y\rq{})$ into $y\rq{}\rq{}=\Phi_{xx}(x,a,b)$. Thus we have
\begin{equation}\label{MtoE}
F(x,y,y\rq{})=\Phi_{xx}\bigl(x,A(x,y,y\rq{}),B(x,y,y\rq{})\bigr).
\end{equation}

Let us now consider the given ODE $\mathcal E$ defined near the origin in $ J^1(\RR{},\RR{})$, and a manifold $\mathcal M$ of solutions of it obtained by some admissible parameterization. If  $H$ is the smooth transformation (existing by \autoref{convergent}) with some data \eqref{mapdata} bringing  $\mathcal M$ into a normal form $\mathcal M_N$, as in \eqref{normalform}, then from \eqref{findab} we find
$$a=y\rq{}+O\left(x^3y\rq{}^2\right)+O\left(xy\rq{}^4\right),\quad b=y+xy\rq{}+O\left(x^3y\rq{}^2\right)+O\left(xy\rq{}^4\right).$$
Substituting this into \eqref{MtoE} gives (in view of $\Phi_{xx}=O(x^2a^4)+O(x^4a^2)$ for a manifold of solutions in the normal form) a function $F(x,y,y\rq{})$ as in \eqref{normalODE}. It means that the transformation given by the first  two components of $H$ brings $\mathcal E$ to the normal form, and this proves the existence claim of the theorem. 

For the uniqueness we may consider a factorization similar to \eqref{factor} for all possible maps into a normal form, and then it is sufficient yo prove the uniqueness statement for maps $G=(f,g)$ satisfying \eqref{normalmap}. Let such a map $g$ maps the initial  ODE $\mathcal E$ into a normal form $\mathcal E_N$. We then choose for the normal form $\mathcal E_N$ the parameterization of solutions given by
$$(a,b)=\bigl(y\rq{}(0),y(0)\bigr).$$
Thus for the corresponding manifold of solutions $\mathcal M_N=\{y=\Phi^*(x,a,b)\}$ we have
\begin{equation}\label{goodparam}
\Phi^*_{0l}(b)=\Phi^*_{1l}(b)=0,\quad l\geq 0.
\end{equation}
  We now substitute $y=\Phi^*(x,a,b)$ into the ODE $\mathcal E_N$   and collect all terms involved in the normalization conditions \eqref{normalform}. From here, by using \eqref{normalODE} and \eqref{goodparam}, it is not 
   difficult to verify that all the conditions \eqref{normalform} are satisfied. Hence $\mathcal M_N$ is in the normal form \eqref{normalform}, and the uniqueness follows from \autoref{convergent}. This completes the proof.
\end{proof}
\begin{proof}[Proof of Theorem 7]
The proof is obtained by combining \autoref{uptoZ2} and the convergence  \autoref{convergent}, and is very analogues to the proof of \autoref{theor1}. We leave the details to the reader.
\end{proof}

\thebibliography{999}

\bibitem{arnoldgeom} V. Arnold. {Geometrical methods in the theory of ordinary differential equations.} Second edition. Fundamental Principles of Mathematical Sciences,
250. Springer-Verlag, New York, 1988.

\bibitem{bagredina} Y. Y. Bagderina. {\it Equivalence of second-order ordinary differential equations to Painlev\'e equations.} Theoret. Math. Phys. 182, no. 2 (2015): 211-230.

\bibitem{bb} M.V. Babich and L.A. Bordag. {\it Projective Differential Geometrical Structure ot the Painlev´e
Equations,} Journal of Diff. Equations 157(2) (1999), 452–485.

\bibitem{bb1} C. Bandle and L.A. Bordag, {\it Equivalence classes for Emden equations}, Nonlinear Analysis 50
(2002), 523–540.

\bibitem{cartan} \'E. Cartan. {\it Sur la g\'eom\'etrie pseudo-conforme des hypersurfaces de l'espace de deux
variables complexes II}. Ann. Scuola Norm. Sup. Pisa Cl. Sci. (2)
1  (1932), no. 4, 333-354.

\bibitem{cartanODE} \'E. Cartan. {\it Sur les vari\'et\'es \`a connexion projective.} Bull.
Soc. Math. France {\bf 52} (1924), 205-241.

\bibitem{chern} S. S. Chern and J. K. Moser. {\it Real hypersurfaces in
complex manifolds.}  Acta Math. {\bf 133}  (1974), 219--271.

\bibitem{clarkson}  P. Clarkson, Peter. Painlevé equations—nonlinear special functions. Orthogonal polynomials and special functions, 331–411, Lecture Notes in Math., 1883, Springer, Berlin, 2006.

\bibitem{ds} V. V. Dmitrieva, R. A. Sharipov, {\it On the point transformations for the second order differential
equations,} Electronic archive at LANL (solv-int 9703003, 1997), 1–14.

\bibitem{gromak}  I. V. Gromak, I. Laine, and S. Shimomura. Painlevé differential equations in the complex plane. Vol. 28. Walter de Gruyter, 2002.

\bibitem{hl} A. Hinkkanen, and I. Laine. {\em Solutions of a modified third Painlevé equation are meromorphic.} J. Anal. Math. 85, no. 1 (2001): 323-337.

\bibitem{kamran} N. Kamran, K. G. Lamb, and W. F. Shadwick. {\it The local equivalence problem for $d^2 y/dx^2$}. J. Diff. Geom. 22, no. 2 (1985): 139-150.

\bibitem{kartak}  V. V. Kartak, {\it Solution of the equivalence problem for the Painlev´e IV equation,} Theoretical
and Mathematical Physics, November 2012, Volume 173, Issue 2, pp 1541-1564.

\bibitem{divergence} I. Kossovskiy and R. Shafikov. {\it Divergent CR-equivalences and meromorphic differential
equations.} To appear in J.  Europ. Math. Soc. (JEMS). Available at http://arxiv.org/abs/1309.6799. 

\bibitem{nonminimalODE}  I. Kossovskiy, R. Shafikov. {\it Analytic
Differential Equations and  Spherical Real Hypersurfaces}. J. Differential Geom. 102 (2016), no. 1, 67-–126. 

\bibitem{nonanalytic} I.~Kossovskiy and B.~Lamel. {\it On
the Analyticity of CR-diffeomorphisms}. To appear in American Journal of Math. (AJM).  Available at http://arxiv.org/abs/1408.6711

\bibitem{analytic} I.~Kossovskiy and B.~Lamel.
 New extension phenomena for solutions of tangential Cauchy–Riemann equations. 
{\em Comm. Partial Differential Equations} 41(6):925--951, 2016.

\bibitem{kruglikov} B.~Kruglikov. {\it Point classification of second order ODEs: Tresse classification revisited and beyond}.
With an appendix by Kruglikov and V. Lychagin. Abel Symp., 5,
Differential equations: geometry, symmetries and integrability,
199–221, Springer, Berlin, 2009.

\bibitem{os} A.\,Ottazzi and G.\, Schmalz. {\it on the classification of para CR-manifolds.} In preparation.

\bibitem{os1} A.\,Ottazzi and G.\, Schmalz. {\it Singular multicontact structures.} J. Math. Anal. Appl. 443 (2016), no. 2, 1220-–1231. 

\bibitem{poincare} H.~Poincar\'e. {\it Les fonctions analytiques de deux variables et la
repr\'esentation conforme}. Rend. Circ. Mat. Palermo. (1907) {\bf 23}, 185--220

\bibitem{segre} B. Segre. {\it Questioni geometriche legate colla teoria delle funzioni di due variabili
complesse.} Rendiconti del Seminario di Matematici di Roma, II,
Ser. {\bf 7} (1932), no. 2, 59-107.

\bibitem{sukhov1}  A. Sukhov. {\it Segre varieties and Lie symmetries.}
Math. Z. {\bf 238} (2001), no. 3, 483--492.

\bibitem{sukhov2} A. Sukhov {\it On transformations of analytic
CR-structures.} Izv. Math. {\bf 67} (2003), no. 2, 303-332.

\bibitem{tresse} A. Tresse. {\it Determination des invariants
ponctuels de l'equation differentielle du second ordre
$y''=\omega(x,y,y')$}, Hirzel, Leipzig, 1896.

\bibitem{yumagu} V. Yumaguzhin. {\it Differential invariants of second order ODEs, I.} . Acta Appl. Math. 109 (2010), no. 1, 283–313.

\end{document}